    \newcommand{\R}{\mathbb{R}}
    \newcommand{\N}{\mathbb{N}}
    \newcommand\open[1]{\accentset{\circ}{#1}}
    \theoremstyle{definition}
        \newtheorem*{nonotheorem}{Theorem}
        \newtheorem{thrm}{Theorem}[section]
        \newtheorem{lem}[thrm]{Lemma}
        \newtheorem{prop}[thrm]{Proposition}
        \newtheorem*{nonoprop}{Proposition}
        \newtheorem{cor}[thrm]{Corollary}
        \newtheorem*{nonocor}{Corollary}
        \newtheorem{defin}[thrm]{Definition}
    \theoremstyle{remark}
        \newtheorem{rem}[thrm]{Remark}
    \theoremstyle{plain}
\title{Affine thickness: Patterns and a Gap Lemma}
\date{\today}
\begin{document}

\author[Richard\,Alasdair\,Howat]{Richard Alasdair Howat}

\address[R.\,A.\,Howat]{School of Mathematics, University of Birmingham, UK}

\begin{abstract}
    A new notion of thickness for subsets of $B[0,1]\subset \mathbb{R}^n$ called affine thickness is defined; this notion of thickness is a generalisation of Falconer-Yavicoli thickness and is adapted to be used in the study of certain sets with affine cut outs.
    Thick sets are proven to be winning for the matrix potential game introduced in \autocite{howat2025matrixpotentialgamestructures} and as an application we can prove that for a thick set, there exists $M\in\mathbb{N}$ depending on the thickness of the set, such that the set contains a homothetic copy of every finite set with at most $M$ elements.
    Additionally, the author provides a counter-example to the gap lemma in $\mathbb{R}^n$ ($n\geq 2$) for Falconer-Yavicoli thickness, stated in \autocite[Theorem 10]{falconer2022intersections-ThicknessBasedGame} proving this result does not hold in the generality stated.
    We go on to provide a gap lemma for affine thickness in $\mathbb{R}^n$ (for $n\geq 2$) under additional conditions to the classical Newhouse gap lemma.
\end{abstract}

\maketitle

\section{Introduction}
Given a compact set $C\subseteq B[0,1]$, we let $(G_k)_{k\in J}$ denote the at most countably many open, bounded, path-connected components of $\mathbb{R}^n\setminus C$ and let $E$ be the unbounded connected component (in $\R$ the set $E$ is the union of two unbounded components).
We call these our gaps.
Using this structure, in \autocite[]{falconer2022intersections-ThicknessBasedGame}, Falconer and Yavicoli defined their notion of thickness in $\R^n$ for the standard Euclidean distance, which we denote $d$.
Extending the definition of Newhouse thickness for subsets of the real line, as presented in \autocite[]{NewhouseThickness}.
It is defined as follows:
\begin{defin}\label{Def: YAV-FALC-NEWH thickness}
    For a compact set $C\subseteq \R^n$, assuming the gaps are ordered in non-increasing diameter, the thickness of $C$ is defined to be  
    \begin{align}\label{eq: Falconer-Yavicoli thickness}
        \tau(C)=\inf_{k\in J}\left\{ \frac{d\left(G_k,E\cup\bigcup_{j=1}^{k-1} G_j\right)}{\operatorname{diam}(G_k)} \right\}.
    \end{align}
    When $J=\emptyset$, that is when there are no bounded gaps, then
    \begin{equation*}\label{eq: Thickness equation when no bounded}
        \tau(C)=\begin{cases}
            \infty \text{ \,\,\, if \,\,\, } \operatorname{int}(C)\neq\emptyset \\
            0 \text{ \,\,\,\,\,\, if \,\,\, } \operatorname{int}(C)=\emptyset
        \end{cases}.
    \end{equation*}
\end{defin}
We note that this notion of thickness is often zero when the gaps are defined using an affine, non-homothetic map, for example in self-affine Bedford-McMullen carpets.
To overcome this, we look to mimic an approach presented by the author, Mitchell and Samuel in \autocite{howat2025matrixpotentialgamestructures}; that is view the set through the lens of an affine map.

Through this technique we can define a new notion of thickness, which we refer to as thickness with respect to $A$, where $A$ is a diagonal affine matrix with positive entries or when the matrix is clear, affine thickness.
We note that our notion is defined for subsets of $B[0,1]$ rather than compact subsets of $\R^n$, this is not restrictive as one can scale the bounded set into $B[0,1]$ using some power of the affine matrix $A$.
This will not affect the thickness as all defined values will be affected equally.
Moreover, we define this notion for both the Euclidean metric and the square metric on $\R^n$. 
\begin{defin}\label{def: size wrt matrix}
    Consider the square or Euclidean metric on $\R^n$ and let $A$ be a diagonal matrix with entries $\beta_{11},\dots,\beta_{nn}\in (0,1)$.
    For a set $F\subseteq B[0,1]$ we define the size with respect to the matrix $A$ by
    \begin{align}\label{eq: size wrt matrix}
        S_A(F) =\inf \left\{t>0: \exists z\in\R^n \text{ s.t } F\subseteq A^{1/t}(B[0,1])+z\right\}.
    \end{align}
\end{defin}

We now define our notion of thickness with respect to a diagonal matrix $A$.

\begin{defin}\label{def: affine thickness and gap distance def}
    Consider the square or Euclidean metric on $\R^n$, let $A$ be a diagonal matrix with entries $\beta_{11},\dots,\beta_{nn}\in (0,1)$, and let $C\subseteq B[0,1]$ be a compact subset with gaps $(G_k)_{k\in J}$ and $E$.
    Assuming $(G_k)_{k\in J}$ is ordered in non-increasing size with respect to $A$, for $m\in\N$ we define the gap distance with respect to the matrix $A$ by
    \begin{align}\label{eq: gap distance equation}
        GD_A(m,C)=\inf\left\{t>0:\substack{\exists z\in\R^n \text{ s.t } G_m\cap A^{1/t}(B[0,1])+z \neq \emptyset \text{ and } \\ \left(A^{1/t}(B[0,1])+z\right)\cap \left( \bigcup_{i < m} G_i\cup E\right)\neq\emptyset}\right\},
    \end{align}
        and thickness with respect to $A$ by
    \begin{align}\label{eq: Affine Thickness equation}
        \tau_A(C)=\inf_{k\in J}\left\{ S_A(G_k)^{-1}-GD_A(k,C)^{-1} \right\}.
    \end{align}
    If $GD_A(k,C)=0$ for some $k\in\N$, then we set $\tau_A(C)=-\infty$ and if $J=\emptyset$, then
    \begin{equation}\label{eq: Affine Thickness equation when no bounded}
        \tau_A(C)=\begin{cases}
            \infty \text{ \,\,\,\,\,\,\,\,\,\,\, if \,\,\, } \operatorname{int}(C)\neq\emptyset \\
            -\infty \text{ \,\,\,\,\,\, if \,\,\, } \operatorname{int}(C)=\emptyset
        \end{cases}.
    \end{equation}
\end{defin}
\begin{rem}\label{rem: our thickness is similar to Yavicoli-Falconer thickness}
    For a compact set $C$ and matrix $A=\beta I$, we have the thickness $\tau(C)$ defined in \Cref{Def: YAV-FALC-NEWH thickness} is such that if $\tau(C)>0$, the following holds; $\log_{\beta}(\tau(C))=-\tau_A(C)$.
    This can be seen by rewriting the metric distance between two sets and gap diameter in terms of gap distance and size with respect to the matrix $A$.
    This shows the our notion of thickness is closely related when the matrix is homothetic.
\end{rem}
\begin{rem}
    We note that gaps are ordered in non-increasing size and the order of the gaps of the same size does not effect the value of thickness, hence thickness is well defined.
    This is because when considering the finitely many gaps of the same size with respect to $A$, the smallest gap distance will appear in the minimum at some stage hence the thickness stays consistent.
\end{rem}

Newhouse's gap lemma states that if you are given two compact sets $C_1,C_2\subseteq \R$ such that neither set lies in a gap of the other and $\tau(C_1)\tau(C_2)>1$, then $C_1\cap C_2\neq\emptyset$.
Indeed, one would hope that this extends directly to thickness defined in $\R^n$, for $n\geq 2$, for both affine and Falconer-Yavicoli thickness.
However, as stated in the following proposition, we can find examples of pairs of sets with large affine thickness whose intersection is empty.

\begin{nonoprop}\label{nonoprop: counter examp}
   Consider the square or Euclidean metric on $\R^n$ and $n\geq 2$.
   Let $A$ be a diagonal matrix with entries $\beta_{11},\dots,\beta_{nn}\in (0,1)$.
   There exists compact sets $C_1,C_2\subseteq B[0,1]$ such that neither of them is contained in a gap of the other, $\tau_A(C_1)+\tau_A(C_2)>0$
   and $C_1\cap C_2=\emptyset$. 
\end{nonoprop}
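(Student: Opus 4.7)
The plan is to produce an explicit counter-example exploiting the topological fact that in dimension $n \geq 2$ the complement of a closed ball is connected, hence contains no bounded components. First, fix the two closed balls
\begin{equation*}
    C_1 = B\bigl[(-1/2, 0, \ldots, 0),\, 1/4\bigr], \qquad C_2 = B\bigl[(1/2, 0, \ldots, 0),\, 1/4\bigr],
\end{equation*}
both sitting inside $B[0,1] \subseteq \R^n$. These are trivially disjoint, since the distance between their centres is $1$ while the sum of their radii is $1/2$.

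Next I would verify the gap and thickness conditions. For $n \geq 2$ any two points of $\R^n \setminus C_i$ can be joined by a path detouring around the ball, so $\R^n \setminus C_i$ is path-connected. Thus the family of bounded components $(G_k)_{k \in J_i}$ is empty and $E_i = \R^n \setminus C_i$. In particular, neither set is contained in a (necessarily bounded) gap of the other, since no such gaps exist. Moreover, each $C_i$ has non-empty interior, so the clause \eqref{eq: Affine Thickness equation when no bounded} of \Cref{def: affine thickness and gap distance def} forces $\tau_A(C_i) = \infty$, and therefore $\tau_A(C_1) + \tau_A(C_2) = \infty > 0$ regardless of whether the square or Euclidean metric is used and regardless of the admissible diagonal matrix $A$.

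There is essentially no computational obstacle; the only subtlety is interpretational. One must read \emph{gap} in the hypothesis as referring strictly to the bounded components $G_k$, matching Newhouse's original convention and the explicit labelling of $E$ as the unbounded component in the set-up preceding \Cref{Def: YAV-FALC-NEWH thickness}. One must also accept the definitional stipulation that non-empty interior combined with $J = \emptyset$ yields $\tau_A = \infty$. Once these conventions are fixed, the construction above simultaneously meets all three requirements of the proposition and highlights precisely why a higher-dimensional gap lemma requires additional hypotheses beyond large thickness---which is what the subsequent positive result of the paper provides.
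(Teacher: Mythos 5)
Your construction fails the hypothesis ``neither of them is contained in a gap of the other,'' because you have resolved the interpretational point the wrong way. In the paper's set-up (first paragraph of the introduction) the gaps of a compact set $C$ are the bounded components $(G_k)_{k\in J}$ \emph{together with} the unbounded component $E$: ``We call these our gaps.'' This matches the classical convention behind Newhouse's gap lemma, where the unbounded gaps must count — otherwise two disjoint intervals in $\R$ would already ``refute'' the one-dimensional gap lemma, and the linking hypothesis would be pointless. For your two disjoint balls one has $C_1\subseteq \R^n\setminus C_2=E^2$ and $C_2\subseteq \R^n\setminus C_1=E^1$, so each set lies entirely inside the (unbounded) gap of the other. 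Your example is therefore exactly the trivial separated configuration that the hypothesis is designed to exclude, and it also cannot serve the purpose of the proposition, namely to contradict \autocite[Theorem 10]{falconer2022intersections-ThicknessBasedGame}, since that theorem carries the same linking hypothesis.

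The paper's proof has to work harder precisely for this reason: it builds $C_1$ as two square annuli plus a singleton $\{x\}$ placed at the centre of one of them, and $C_2$ as a thin annulus around $x$ plus the singleton $\{-x\}$, so that each set meets at least two distinct gaps of the other (e.g.\ $C_1$ meets both $E^2$ and the bounded gap $\open{B}[x,t]$ of $C_2$, while $C_2$ meets two bounded gaps of $C_1$). One then computes $S_A$ and $GD_A$ of the bounded gaps explicitly and chooses $r>s>t$ small enough that $\tau_A(C_1)+\tau_A(C_2)>0$ while $C_1\cap C_2=\emptyset$. If you want to salvage your approach you would need to modify the balls by planting a point of each set inside a bounded gap of the other (as the paper does), at which point the thickness is no longer $\infty$ by the $J=\emptyset$ clause and must actually be computed.
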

This directly leads to the following corollary, providing a counterexample to the gap lemma for $\R^n$ ($n\geq 2$) stated by Falconer and Yavicoli in \autocite[Theorem 10]{falconer2022intersections-ThicknessBasedGame}.
Showing that in fact the gap lemma does not directly extend to higher dimensions with Falconer-Yavicoli thickness.
\begin{nonocor}\label{nonocor:CounterexampletoTHRM10}
    There exists sets $C_1,C_2\subseteq \R^n$ ($n\geq 2$) such that neither set lies in a gap of the other, $\tau(C_1)\tau(C_2)>1$ and $C_1\cap C_2=\emptyset$.
\end{nonocor}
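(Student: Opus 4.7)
The plan is to deduce the corollary directly from Proposition 1.5 combined with Remark 1.4, which ties the Falconer--Yavicoli thickness to the affine thickness in the homothetic case. No new construction is required; the substantive content lies entirely in the proposition.

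I would start by fixing any $\beta \in (0,1)$ and setting $A = \beta I$, so that Proposition 1.5 applies to this (homothetic) matrix. It produces compact sets $C_1, C_2 \subseteq B[0,1]$ which are disjoint, neither contained in a gap of the other, and which satisfy $\tau_A(C_1) + \tau_A(C_2) > 0$. The latter inequality forces both $\tau_A(C_i)$ to be finite (otherwise the sum would be $-\infty$), so the hypothesis of Remark 1.4 is in force, and the remark gives the identity $\tau(C_i) = \beta^{-\tau_A(C_i)}$ for $i=1,2$; in particular both $\tau(C_i)$ are strictly positive real numbers. Multiplying,
\[
\tau(C_1)\tau(C_2) \;=\; \beta^{-(\tau_A(C_1)+\tau_A(C_2))},
\]
and since $\beta \in (0,1)$ while the exponent is strictly negative, this quantity is strictly greater than $1$. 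The disjointness $C_1 \cap C_2 = \emptyset$ and the fact that neither set lies in a gap of the other transfer unchanged from the proposition, since neither condition depends on which notion of thickness is being used.

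There is essentially no obstacle to this argument; the corollary is a restatement of Proposition 1.5 through the change-of-variables recorded in Remark 1.4. The one piece of book-keeping to check is that the sign conventions match up: the proposition guarantees $\tau_A(C_1)+\tau_A(C_2) > 0$, and since Falconer--Yavicoli thickness corresponds to $\beta^{-\tau_A}$ rather than $\beta^{\tau_A}$, a positive sum on the affine side translates to a product strictly exceeding $1$ on the classical side, precisely as required.
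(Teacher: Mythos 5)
Your proposal is correct and matches the paper's proof, which likewise deduces the corollary by combining Proposition~\ref{prop: counter examp} with the translation $\log_\beta(\tau(C))=-\tau_A(C)$ of \Cref{rem: our thickness is similar to Yavicoli-Falconer thickness} in the homothetic case $A=\beta I$; your version merely spells out the sign bookkeeping that the paper leaves implicit.
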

Since the gap lemma does not directly extend to higher dimensions we must explore conditions in which an affine gap lemma is true in $\R^n$.
This leads to \Cref{def: strongrefinable}, in which we define strongly refinable pairs of sets. 
Using this definition we can obtain an affine gap lemma, stated below. 
\begin{nonotheorem}[Affine Gap Lemma]\label{nonothrm: Affine Gap Lemma}
    Consider the square or Euclidean metric on $\R^n$, let $A$ be a diagonal matrix with entries $\beta_{11},\dots,\beta_{nn}\in (0,1)$ and let $C_1,C_2\subseteq B[0,1]$ be a strongly refinable pair, non-empty and compact. If $\tau_A(C_1)+\tau_A(C_2)>0$, then $C_1\cap C_2\neq\emptyset$.
\end{nonotheorem}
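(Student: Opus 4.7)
The plan is a Newhouse-style descent argument, adapted to affine boxes and the definitions of $S_A$, $GD_A$, and $\tau_A$. Assume, for contradiction, that $C_1 \cap C_2 = \emptyset$. Since neither $C_i$ is contained in a gap of the other and each $C_i \subseteq B[0,1]$ is compact and non-empty, one can produce an initial \emph{linked} pair of affine boxes $(B_1, B_2)$, with $B_i = A^{1/t_i}(B[0,1]) + z_i$ arising either as a gap of $C_i$ or (at the first step) as a box witnessing $S_A$ of the whole set, such that $B_1 \cap B_2 \neq \emptyset$ but neither $B_i$ is contained in the other.

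The heart of the argument is a refinement step: suppose $t_1 \geq t_2$, so $B_1$ is the larger of the two. Because $C_1 \cap C_2 = \emptyset$, any point of $C_2 \cap B_1$ lies in some gap of $C_1$, and a linking/connectedness argument forces a bounded gap $G_k^{(1)}$ of $C_1$ to meet $B_2$ in a linked way. Using the definition of $GD_A(k,C_1)$ together with the bound $\tau_A(C_1) > -\tau_A(C_2)$, I would derive a quantitative inequality saying that any affine box that links $G_k^{(1)}$ to a previously-ordered gap or to $E$ must be strictly larger than $G_k^{(1)}$ in the $S_A$-scale, by a margin controlled by $\tau_A(C_1)$. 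The strongly refinable hypothesis enters to guarantee that the geometric configuration inside $B_1 \cap B_2$ really does split into such a controlled linked sub-pair, rather than producing the transverse arrangement that underlies the counter-example of \Cref{nonoprop: counter examp}. Swapping the roles of $C_1$ and $C_2$ in alternation then produces a nested sequence of linked pairs $(B_1^{(m)}, B_2^{(m)})$ with $\max(S_A(B_1^{(m)}), S_A(B_2^{(m)})) \to 0$.

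Compactness finishes the job: each $B_i^{(m)}$ meets $C_i$, so picking $x_i^{(m)} \in C_i \cap B_i^{(m)}$ and passing to a convergent subsequence yields a common limit $x \in C_1 \cap C_2$, contradicting disjointness.

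The main obstacle I anticipate is the geometric bookkeeping in the refinement step. With $A$ diagonal but not scalar, the boxes $A^{1/t}(B[0,1]) + z$ can be highly elongated, so a linked pair need not restrict neatly to a sub-configuration of either $C_i$; this is exactly the failure mode exploited by the counter-example to the na\"ive higher-dimensional gap lemma. The role of the strongly refinable assumption is presumably to exclude these pathological configurations, and converting its combinatorial content into the clean quantitative inequality $S_A(B_i^{(m+1)}) \leq \lambda \, S_A(B_i^{(m)})$ with some $\lambda < 1$ depending only on $\tau_A(C_1) + \tau_A(C_2)$ will be the delicate step the proof must perform.
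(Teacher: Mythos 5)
Your overall skeleton (descent through linked gap pairs, then compactness extracting a common limit point) is in the right spirit, but the proposal has a genuine gap, and it is exactly at the point you flag as ``the delicate step.'' The hypothesis of strong refinability (\Cref{def: strongrefinable}) is not a geometric condition on $C_1,C_2$ that you feed into the refinement step inside $B_1\cap B_2$; it is an existence statement: there are compact sets $\tilde{C}_1,\tilde{C}_2$ that are BG linked, possess bounded gaps whose boundaries are not contained in the other set's unbounded gap, satisfy $\tau_A(\tilde{C}_1)+\tau_A(\tilde{C}_2)\geq \tau_A(C_1)+\tau_A(C_2)$, and have $\tilde{C}_1\cap\tilde{C}_2\subseteq C_1\cap C_2$. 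The intended (and essentially only available) first move is to replace $(C_1,C_2)$ by $(\tilde{C}_1,\tilde{C}_2)$ and prove the gap lemma for BG linked pairs; your plan instead runs the descent directly on $C_1,C_2$ while asserting ``neither $C_i$ is contained in a gap of the other,'' which is not a hypothesis of the theorem, and hoping strong refinability ``excludes pathological configurations.'' Since \Cref{prop: counter examp} shows that positive total thickness plus non-containment in a single gap is compatible with disjointness, no descent argument on $C_1,C_2$ alone can close; the argument must be carried out on the refined BG linked pair, where every pair of bounded gaps is either linked or disjoint.

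Concretely, two ingredients are missing from your refinement step. First, the quantitative engine is the incompatibility: for linked bounded gaps $G^1_s$ of $\tilde{C}_1$ and $G^2_t$ of $\tilde{C}_2$, one cannot have both $GD_A(s,\tilde{C}_1)\leq S_A(G^2_t)$ and $GD_A(t,\tilde{C}_2)\leq S_A(G^1_s)$ when $\tau_A(\tilde{C}_1)+\tau_A(\tilde{C}_2)>0$ (cf.\ \eqref{eq: no both hold}); whichever inequality fails tells you which set's gap to refine, and guarantees that the boundary point you pick lands in a \emph{bounded} gap of the other set, so the new pair is again linked (this is where BG linkedness and the boundary condition $\partial \tilde{G}^1\not\subseteq\tilde{E}^2$, $\partial \tilde{G}^2\not\subseteq\tilde{E}^1$ are used, including to start the induction). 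Second, your proposed geometric contraction $S_A(B^{(m+1)})\leq \lambda\, S_A(B^{(m)})$ with $\lambda<1$ is neither established nor needed: what the construction actually gives is that the index of the gap being refined strictly increases at each step, and since gaps are enumerated in non-increasing $S_A$-size, the sizes (hence diameters) along at least one of the two index sequences tend to zero by \Cref{lem: size go to zero or finite} (or the index set is finite, which is handled separately). With that replacement the compactness endgame you describe does go through, as in \Cref{thrm: BG linked Affine Gap Lemma}; without it, the proposal stops exactly where the proof has to do its work.
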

The connection between thickness of sets and winning strategies of potential games has been studied by Yavicoli et al in \autocite{falconer2022intersections-ThicknessBasedGame,OriginalAlexiaPaper,yavicoli2022thickness-BoundaryBasedGame}.
We can replicate these results for affine thickness using the matrix potential game, introduced by the author, Mitchell and Samuel in \cite{howat2025matrixpotentialgamestructures}.
This results in the following theorems on patterns and intersections of thick sets.
Note that these results hold in the square metric.
\begin{nonotheorem}\label{nothm:patterns}
    Given $n \in \N$, consider the square metric on $\R^n$ and let $A$ be a diagonal ($n\!\times\!n$)-matrix with diagonal entries $\beta_{11},\dots,\beta_{nn} \in (0, 1/5)$, setting $\beta_{\max} = \max_j \beta_{jj}$.
    Let $C\subseteq B[0,1]$ be a compact subset with gaps $(G_k)_{k\in J}$ and $E$.
    Moreover, $\tau_A(C)\not\in\{ -\infty,\infty\}$.
    
    Let $\alpha_{\tau_A(C)}=\prod_{j=1}^n \beta_{jj}^{\tau_A(C)-1}$.
    If there exist $c,\delta\in(0,1)$ and $M \in \N$ such that
        \begin{align}\label{pattern_inequal_1}
            M\alpha_{\tau_A(C)}^c\leq \delta^{2}\left(1-\left(\prod_{j=1}^n\beta_{jj}\right)^{1-c}\right) \quad \text{and} \quad 
            3^{-n} \prod\limits_{j=1}^{n} (1-5\beta_{jj}^{\lfloor\delta(M^{1/c}\alpha_{\tau_A(C)})^{-1}\rfloor}) > 8^n (1+2^{2n+1})\delta.
        \end{align} 
    Then given a finite non-empty set $F$ with at most $M$ elements, $y\in\R^n$ and $\lambda\in (0,(1-\beta_{\max})/\text{diam}(F))$, there exists a non-empty set $X\subseteq \R^n$ such that $\lambda F+x\subseteq (C\cup E)\cap B[y,1]$ for all $x\in X$.
    Moreover, if
    \begin{align*}
        M\alpha_{\tau_A(C)}^c\leq \min\{\delta^2,K_{M}^{-1} n \log \beta_{\max}^{-1}\}\left(1-\left(\prod_{j=1}^n\beta_{jj}\right)^{1-c}\right),
    \end{align*}
    where 
        \begin{align*}
        K^{}_{M} =2\delta^{-1}\left\lvert \log\left( 3^{-n}\prod\limits_{j=1}^{n} (1-5\beta_{jj}^{\lfloor\delta(M^{\frac{1}{c}}\alpha_{\tau_A(C)})^{-1}\rfloor}) - 8^n (1+2^{2n+1})\delta\right)\right\rvert,
        \end{align*}
    then $\dim_{H}(X) \geq n-K^{}_{M} \alpha_{\tau_A(C)} (\lvert\log (\beta_{\max}) \rvert)^{-1}>0$.
\end{nonotheorem}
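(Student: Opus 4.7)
The plan is to reduce the pattern-finding problem to the matrix potential game of \autocite{howat2025matrixpotentialgamestructures} and invoke the winning property of thick sets that is established earlier in the paper. First, I would rewrite the target set of valid translations as
\[
X = \bigl\{ x \in \R^n : \lambda F + x \subseteq (C \cup E) \cap B[y,1] \bigr\},
\]
and observe that $x \notin X$ precisely when some translate $\lambda f_i + x$ meets one of the bounded gaps $G_k$ of $C$, or falls outside $B[y,1]$. The complement of $X$ inside $B[y,1]$ is therefore a union of at most $|F| \leq M$ translated copies of $\lambda$-scalings of the bounded gap regions $(G_k)_{k \in J}$. If Alice can enumerate each such forbidden region as a move in the matrix potential game and Bob still wins, then $X$ is non-empty; the restriction $\lambda \in (0, (1 - \beta_{\max})/\operatorname{diam}(F))$ is exactly what is needed to keep the $M$ copies of $F$ well contained in a single unit affine ball so that the game is played in a single ambient region.

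The next step is to bound the total potential Alice must expend. By the definition of affine thickness, for every bounded gap $G_k$ we have $S_A(G_k)^{-1} - GD_A(k, C)^{-1} \geq \tau_A(C)$, so each $G_k$ is contained in a translate of $A^{1/t_k}(B[0,1])$ for $t_k$ controlled explicitly by $\tau_A(C)$. Summing the induced potentials $\prod_j \beta_{jj}^{(\cdot)}$ over all bounded gaps requires a dyadic-in-$A$ decomposition: split the gaps according to whether their $A$-size is above or below a threshold parametrised by $c \in (0,1)$, estimate the large-gap tail by the finite count of gaps that can fit inside $B[0,1]$ at that scale, and estimate the small-gap tail by a geometric series in $\prod_j \beta_{jj}^{1-c}$. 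The resulting total, multiplied by $M$ to account for the translates $\lambda f_i$, is dominated by $M \alpha_{\tau_A(C)}^c / (1 - (\prod_j \beta_{jj})^{1-c})$; the first inequality in \eqref{pattern_inequal_1} then states precisely that this total is below Bob's $\delta^2$ budget, so the winning property of thick sets (established earlier in the paper) forces $X \neq \emptyset$.

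For the dimension bound I would pass to the quantitative version of the winning strategy. The second inequality in \eqref{pattern_inequal_1} guarantees, at each round of the game, a uniform lower bound on the relative $n$-volume of Bob's admissible counter-moves inside his current affine ball, namely the positive quantity
\[
3^{-n} \prod_{j=1}^n (1 - 5\beta_{jj}^{\lfloor \delta (M^{1/c} \alpha_{\tau_A(C)})^{-1} \rfloor}) - 8^n(1 + 2^{2n+1})\delta,
\]
so Bob can branch into a controlled number of pairwise disjoint sub-balls at each step. Under the strengthened hypothesis $M \alpha_{\tau_A(C)}^c \leq K_M^{-1} n \log \beta_{\max}^{-1} \cdot (1 - (\prod_j \beta_{jj})^{1-c})$, a standard mass-distribution / tree-counting argument on this branching parameter builds a Cantor-type subset of $X$ whose Hausdorff dimension is bounded below by $n - K_M \alpha_{\tau_A(C)} / |\log \beta_{\max}|$, which is positive by the same hypothesis.

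The main obstacle I expect is the bookkeeping behind the first inequality: the gaps are of arbitrary $A$-size and are in general infinite in number, so the dyadic decomposition has to be compatible with the non-homothetic action of $A$ (which stretches at different rates along different coordinates), and the exponent $c$ must be chosen to balance the two tails and align with the appearance of $\alpha_{\tau_A(C)} = \prod_j \beta_{jj}^{\tau_A(C) - 1}$. Once this estimate is in place, the patterns claim is essentially a restatement of the winning result for thick sets, and the dimension claim follows from the quantitative counterpart recorded in \autocite{howat2025matrixpotentialgamestructures}.
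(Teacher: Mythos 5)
Your overall route---reduce to the matrix potential game, use the winning property of thick sets proved in this paper, and quote the quantitative machinery of \autocite{howat2025matrixpotentialgamestructures} for the dimension bound---is exactly the paper's route: its proof is a two-line citation of \Cref{prop: Winning conditions} together with \autocite[Theorem 6.1]{howat2025matrixpotentialgamestructures}. However, the quantitative step you insert in the middle would fail. You propose to bound the total $c$-potential of \emph{all} bounded gaps of $C$ (times $M$) by $M\alpha_{\tau_A(C)}^c\big(1-(\prod_j\beta_{jj})^{1-c}\big)^{-1}$, using a dyadic-in-$A$ decomposition and ``the finite count of gaps that can fit inside $B[0,1]$ at that scale''. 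Affine thickness gives no such global bound: it only controls each gap's size relative to its gap distance. The gap-distance condition does yield a per-scale packing count of order $\prod_j\beta_{jj}^{-(1/s-\tau_A(C))}$ for gaps of $S_A$-size about $s$, but multiplying by the per-gap deletion potential $\prod_j\beta_{jj}^{c/s}$ gives a contribution of order $\prod_j\beta_{jj}^{\tau_A(C)-(1-c)/s}$, which blows up as $s\to 0$; summed over all gaps this diverges in general (already for the self-affine Sierpinski carpets of Section 5 there are exponentially many gaps per generation). Moreover the game has no global deletion budget against which to compare such a sum: Player II's constraint is per turn, of size $(\alpha\prod_j\beta_{jj}^m)^c$ at turn $m$, and the factor $\big(1-(\prod_j\beta_{jj})^{1-c}\big)^{-1}$ in \eqref{pattern_inequal_1} arises from summing these per-turn budgets across the rounds of the game inside the companion paper's Theorem 6.1, not from a sum over the gaps of $C$.

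The correct mechanism is the one isolated in \Cref{prop: Winning conditions}: Player II plays a $c=0$ strategy, deleting at each turn only the single gap $G_i$ that meets Player I's current box once the box is deep enough, and legality follows from $S_A(G_i)^{-1}\ge\tau_A(C)+GD_A(i,C)^{-1}\ge\tau_A(C)+m-1$, so the deleted box has potential at most $\alpha_{\tau_A(C)}\prod_j\beta_{jj}^{m}$; no summation over the gap structure of $C$ is ever needed. The passage from this $c=0$ statement to $c>0$, to the $M$ translates $\lambda f_i$, and to the Hausdorff dimension bound is then precisely the content of \autocite[Theorem 6.1]{howat2025matrixpotentialgamestructures}, via the monotonicity and countable-intersection lemmas, and you should cite it for the nonemptiness of $X$ as well, not only for the dimension claim. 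With your summation step removed and replaced by this per-turn argument, your proposal collapses to the paper's proof.
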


\begin{nonotheorem}\label{nocor:intersections}
    Given $n \in \N$, consider the square metric on $\R^n$ and let $A$ be a diagonal ($n\!\times\!n$)-matrix with diagonal entries $\beta_{11},\dots,\beta_{nn} \in (0, 1/5)$.
    Moreover, let $I$ be an at most countable set and for each $i \in I$, let $C_i\subseteq B[0,1]$ be a compact set with gaps $(G_k^{(i)})_{k\in J_i}$ and $E^{(i)}$.
    Moreover, for each $i\in I$, $\tau_A(C_{i})\not\in \{-\infty,\infty\}$.
    
    Set $C = \cap_{i \in I} (C_i\cup E^{(i)})$ and $\alpha_{\tau_A(C_i)}=\prod_{j=1}^n \beta_{jj}^{\tau_A(C_i)-1}$.
    If there exists $\alpha,c,\delta \in (0,1)$ such that $\alpha^c=\sum_{i\in I} \alpha_{\tau_A(C_i)}^c$ and 
        \begin{align}\label{pattern_inequal_2}
            \alpha^c\leq \delta^2\left(1-\left(\prod_{j=1}^n\beta_{jj}\right)^{1-c}\right)<1 \quad \text{and} \quad 0<\prod\limits_{j=1}^n\left( \frac{1}{3}(1-5\beta_{jj}^{\lfloor\delta\alpha^{-1}\rfloor})\right)-8^n (1+2^{2n+1})\delta,
        \end{align}
        then, for every $y\in\R^n$, we have that $C\cap \big(A(B[0,1])+y\big)\neq \emptyset$.
        Moreover,
    \begin{align*}
        \text{dim}_H\left(C\cap \big(A(B[0,1])+y\big)\right)\geq\max\left\{ n-K_1\frac{\alpha}{\left|\log(\beta_{\max})\right|},0\right\}.
    \end{align*}
\end{nonotheorem}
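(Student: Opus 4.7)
The natural approach is to cast this as a direct application of the matrix potential game machinery of \autocite{howat2025matrixpotentialgamestructures}, used as the main engine, and to leverage the countable intersection property of winning sets. The plan proceeds in three steps.

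\textbf{Step 1: Thickness $\Rightarrow$ winning.} The paper's abstract promises (and surrounding results presumably establish explicitly) that if $\tau_A(C_i)\not\in\{-\infty,\infty\}$, then $C_i\cup E^{(i)}$ is a winning set for the matrix potential game associated to $A$, with potential parameter $\alpha_{\tau_A(C_i)}^{c}$ for the exponent $c\in(0,1)$ fixed in the hypothesis. The first inequality in (\ref{pattern_inequal_2}), namely
\[
\alpha^{c}\leq \delta^{2}\Bigl(1-\bigl(\textstyle\prod_{j}\beta_{jj}\bigr)^{1-c}\Bigr)<1,
\]
will be the condition that the total potential $\alpha^{c}=\sum_{i\in I}\alpha_{\tau_A(C_i)}^{c}$ is admissible for the game, and hence \emph{a fortiori} each individual $\alpha_{\tau_A(C_i)}^{c}$ is admissible. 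I would cite the (earlier) theorem in the paper that translates affine thickness into a winning potential; this is the step most at risk of requiring a bespoke argument, since the passage between Definition \ref{def: affine thickness and gap distance def} and the game's potential function has to be made quantitative so that the constants $\delta$ and $c$ line up with those in (\ref{pattern_inequal_2}).

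\textbf{Step 2: Countable intersection.} Standard potential-game formalism is designed precisely so that a countable union of potentials still produces a winning set, provided the sum of potentials satisfies the admissibility bound. Here that is exactly the identity $\alpha^{c}=\sum_{i\in I}\alpha_{\tau_A(C_i)}^{c}$ combined with the first clause of (\ref{pattern_inequal_2}). Therefore the second player may superimpose the winning strategies for each $C_i\cup E^{(i)}$ into a single strategy whose limit point lies in $C=\bigcap_{i\in I}(C_i\cup E^{(i)})$. This step is nearly formal once Step 1 is in place.

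\textbf{Step 3: Target sets and dimension.} In the matrix potential game, the target the first player chooses is, up to affine equivalence, a set of the form $A(B[0,1])+y$; the winning condition forces the limit point of play to lie in the intersection of the winning set with every chosen target. The second inequality in (\ref{pattern_inequal_2}),
\[
0<\prod_{j=1}^{n}\tfrac{1}{3}\bigl(1-5\beta_{jj}^{\lfloor\delta\alpha^{-1}\rfloor}\bigr)-8^{n}(1+2^{2n+1})\delta,
\]
is the quantitative hypothesis guaranteeing that, for the given $\delta$, the contractive step of the game still leaves a positive-measure portion available; together with Step 2 this yields $C\cap(A(B[0,1])+y)\neq\emptyset$ for every $y$. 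The Hausdorff dimension bound then follows from the potential-game dimension estimate of \autocite{howat2025matrixpotentialgamestructures}: the exponent $K_{1}\alpha/\lvert\log\beta_{\max}\rvert$ is the standard loss term one picks up from iterating at scale $\beta_{\max}$ against a potential of mass $\alpha$, and the maximum with $0$ handles degenerate regimes. The main technical obstacle across the three steps is verifying that the constants in Step 1 propagate unchanged into the constants of the intersection game in Step 2; once that compatibility is checked, Step 3 is a direct quotation of the game's master theorem.
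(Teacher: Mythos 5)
Your proposal is correct and follows essentially the same route as the paper: the paper's proof is exactly the combination of its earlier proposition that a set with $\tau_A(C_i)\not\in\{-\infty,\infty\}$ has $C_i\cup E^{(i)}$ winning for the matrix potential game (\Cref{prop: Winning conditions}), followed by a direct citation of the intersection/dimension result (Corollary 6.7) of \autocite{howat2025matrixpotentialgamestructures}, whose hypotheses are the inequalities in \eqref{pattern_inequal_2} and whose countable-intersection mechanism is the one you describe (with monotonicity lifting the $c=0$ winning parameter to the given $c>0$).
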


\section{An Affine Gap Lemma in $\R^n$}
Throughout this section we consider the square or Euclidean metric on $\R^n$.
Given two compact sets $C_1,C_2\subseteq \R$, the following result holds linking thickness and intersection, this is known as the Newhouse gap lemma.
\begin{thrm}[Newhouse Gap Lemma]
    Given two compact sets $C_1,C_2\subseteq \R$ such that neither set lies in a gap of the other, if $\tau(C_1)\tau(C_2)>1$ then $C_1\cap C_2\neq\emptyset$.
\end{thrm}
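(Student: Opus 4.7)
The plan is to argue by contradiction via Newhouse's descent on linked bridges. Suppose $C_1 \cap C_2 = \emptyset$. First, introduce the standard bookkeeping: for $C \subset \R$ compact and $x \in C$, a \emph{bridge} of $C$ at $x$ is a maximal closed interval $[x, y] \subset \operatorname{conv}(C)$ with $y \in C$ such that no gap of $C$ of length greater than $|x-y|$ lies strictly between $x$ and $y$. The definition of $\tau(C)$ in \Cref{Def: YAV-FALC-NEWH thickness} translates into the statement that, for every gap $G$ of $C$, both bridges of $C$ flanking $G$ have length at least $\tau(C)\,|G|$. Two bridges $B_1, B_2$ of $C_1, C_2$ respectively are \emph{linked} if each has an endpoint in the interior of the other. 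The hypothesis that neither $C_i$ lies in a gap of the other furnishes an initial linked pair $(B_1^{(0)}, B_2^{(0)})$: the convex hulls are themselves bridges, and the hypothesis precludes their being either disjoint or nested.

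Next, I would perform the descent. Given a linked pair $(B_1, B_2)$, without loss of generality an endpoint $p$ of $B_2$ lies in the interior of $B_1$. Since $p \in C_2 \setminus C_1$, the point $p$ sits in some gap $G$ of $C_1$; and $G \subset B_1$ because the endpoints of $B_1$ belong to $C_1$. The thickness bound gives flanking $C_1$-bridges of $G$ of length at least $\tau(C_1)\,|G|$. Correspondingly, whichever endpoint of $G$ lies in the interior of $B_2$ belongs to $C_1 \setminus C_2$ and hence to a gap $H$ of $C_2$, whose flanking $C_2$-bridges have length at least $\tau(C_2)\,|H|$. Pairing the flanking $C_1$-bridge on one side of $G$ with the flanking $C_2$-bridge on the opposite side of $H$ produces a new candidate linked pair $(B_1', B_2')$. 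The strict inequality $\tau(C_1)\tau(C_2) > 1$ is precisely what forces this pair to genuinely overlap — an inequality of the form $\tau(C_1)|G| + \tau(C_2)|H| > |G| + |H|$ guarantees the new bridges are long enough to cross each other rather than fit inside the gaps — so linkedness is preserved and $\max(|B_1'|,|B_2'|)$ is strictly smaller than $\max(|B_1|,|B_2|)$.

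Iterating produces a nested sequence of shrinking linked pairs. By compactness of $C_1, C_2$ and Cantor's nested interval argument, the endpoints converge to a common limit in $C_1 \cap C_2$, contradicting disjointness. The main obstacle is the descent step: orchestrating the case analysis so that linkedness is preserved from one iteration to the next, and verifying that the product hypothesis (rather than merely $\tau(C_1)\tau(C_2) \geq 1$) is what rules out the degenerate case where the chosen flanking bridges would be disjoint or nested. Once the single descent step is set up carefully, the compactness conclusion and convergence to an intersection point are routine.
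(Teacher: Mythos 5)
The paper does not actually prove this classical statement (it is quoted from Newhouse as background), so the fair comparison is with the paper's own proof of its affine analogue, \Cref{lem: Linked Sets Sequence Exists} and \Cref{thrm: BG linked Affine Gap Lemma}, which follows the same descent-on-linked-configurations skeleton you propose. Your overall plan is the right one, but the pivotal quantitative step is wrong as stated: the hypothesis $\tau(C_1)\tau(C_2)>1$ does \emph{not} imply $\tau(C_1)|G|+\tau(C_2)|H|>|G|+|H|$. For instance with $\tau(C_1)=100$, $\tau(C_2)=1/50$ and $|G|$ much smaller than $|H|$ the product is $2$ but the sum inequality fails; this is exactly the regime (one set far thicker than the other) that the hypothesis allows. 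What the product condition really yields is the dichotomy that one cannot simultaneously have $\tau(C_1)|G|<|H|$ and $\tau(C_2)|H|<|G|$ --- the one-dimensional counterpart of display \eqref{eq: no both hold} in the paper --- and the descent must branch on which of the two inequalities holds, using the correspondingly long flanking bridge to cross the far endpoint of the other set's gap, thereby replacing one of the two gaps by a strictly smaller gap while keeping the pair linked. Your ``sum forces overlap'' step is the heart of the argument and it is false, so as written the descent does not go through.

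There are three further gaps. First, your bridge definition is vacuous: any gap lying strictly between $x$ and $y$ automatically has length at most $|x-y|$, so the stated condition excludes nothing; a bridge must be defined relative to a fixed gap $G$ (the maximal interval abutting $\partial G$ that meets no gap of length at least $|G|$), since otherwise the assertion that the bridges flanking $G$ have length at least $\tau(C_1)|G|$ has no content. Second, the initial linked pair: ``neither set lies in a gap of the other'' does not preclude nested convex hulls --- take $C_1=\{0,1,2\}$ and $C_2=\{1/2,3/2\}$, which are disjoint, neither lies in a single gap of the other, yet $\operatorname{conv}(C_2)\subset \operatorname{int}\,\operatorname{conv}(C_1)$; the standard (and the paper's) remedy is to extract an initial \emph{linked pair of gaps} from disjointness plus the no-containment hypothesis, not linked hulls. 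Third, the conclusion: knowing only that $\max(|B_1'|,|B_2'|)$ strictly decreases gives neither nestedness nor lengths tending to zero (a strictly decreasing positive sequence can have a positive limit). The convergence has to come from the fact that along the descent one of the two gap indices in the size-ordered gap family increases infinitely often, so the corresponding gap diameters tend to zero (the role of \Cref{lem: size go to zero or finite} in the paper), after which choosing boundary points of the current gaps produces sequences in $C_1$ and in $C_2$ with a common accumulation point, contradicting $C_1\cap C_2=\emptyset$.
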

Falconer and Yavicoli claim in \autocite[Theorem 10]{falconer2022intersections-ThicknessBasedGame}, that this result extends to $\R^n$ directly, and one can obtain a Newhouse Gap Lemma in $\R^n$ with the same conditions using Falconer-Yavicoli thickness.
However, this is not the case.
In the following proposition we present an example of sets in $\R^n$ whose intersection is empty yet their affine thickness is sufficiently large and neither set lies in a gap of the other.
This leads to \Cref{cor:CounterexampletoTHRM10}, which directly contradicts Theorem 10 in \cite{falconer2022intersections-ThicknessBasedGame}.
\begin{prop}\label{prop: counter examp}
   Consider the square or Euclidean metric on $\R^n$ and $n\geq 2$.
   Let $A$ be a diagonal matrix with entries $\beta_{11},\dots,\beta_{nn}\in (0,1)$.
   There exists compact sets $C_1,C_2\subseteq B[0,1]$ such that neither of them is contained in a gap of the other, $\tau_A(C_1)+\tau_A(C_2)>0$
   and $C_1\cap C_2=\emptyset$.
\end{prop}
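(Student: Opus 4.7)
The plan is to exhibit two disjoint compact sets in $B[0,1]$ that ``interlink'' topologically: each has one component sitting inside a bounded gap of the other and a second component sitting in the unbounded gap. In dimension $n\geq 2$ this is easy to arrange and simultaneously yields positive affine thickness. Writing $\tilde B_t(z) := A^{1/t}(B[0,1]) + z$ for the translated $A$-box, I would fix two centres $p_1, p_2 \in B[0,1]$ and set
\begin{align*}
    C_1 &= \bigl(\tilde B_{a}(p_1)\setminus\operatorname{int}\tilde B_{a'}(p_1)\bigr) \cup \bigl(\tilde B_{b}(p_2)\setminus\operatorname{int}\tilde B_{b'}(p_2)\bigr),\\
    C_2 &= \bigl(\tilde B_{c}(p_1)\setminus\operatorname{int}\tilde B_{c'}(p_1)\bigr) \cup \bigl(\tilde B_{d}(p_2)\setminus\operatorname{int}\tilde B_{d'}(p_2)\bigr),
\end{align*}
with $a>a'$, $b>b'$, $c>c'$, $d>d'$. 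The parameters are arranged so that at $p_1$ the $C_2$-shell lies strictly inside the hole of the $C_1$-shell ($c<a'$), while at $p_2$ the roles swap ($b<d'$); taking $|p_1-p_2|$ large relative to the outer-shell sizes keeps the two centres' constructions disjoint and inside $B[0,1]$.

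Verification of the three required properties proceeds as follows. Disjointness is immediate from the nesting at each centre together with the separation of $p_1, p_2$. For the gap condition, the bounded gaps of $C_1$ are exactly $\operatorname{int}\tilde B_{a'}(p_1)$ and $\operatorname{int}\tilde B_{b'}(p_2)$; the $C_2$-shell at $p_1$ lies inside the first (a bounded gap of $C_1$), whereas the $C_2$-shell at $p_2$ surrounds the $C_1$-shell at $p_2$ and so lies in $E^{(1)}$. Hence $C_2$ meets two distinct gaps of $C_1$, and symmetrically $C_1$ meets two distinct gaps of $C_2$. For the thickness I would compute directly from \Cref{def: affine thickness and gap distance def}: each hole is itself an $A$-box, so $S_A(\operatorname{int}\tilde B_{a'}(p_1)) = a'$, and the smallest $A$-box straddling the $p_1$-shell along a single coordinate direction yields
\[
    GD_A(1,C_1)^{-1} \;=\; \max_j\, \frac{\log\bigl((\beta_{jj}^{1/a}-\beta_{jj}^{1/a'})/2\bigr)}{\log\beta_{jj}}.
\]
For $a'$ much smaller than $a$ this is approximately $1/a + \log 2/|\log\beta_{\max}|$, so the contribution $S_A(G_1)^{-1} - GD_A(1,C_1)^{-1}\approx 1/a' - 1/a - \log 2/|\log\beta_{\max}|$ is positive once the ratio $a'/a$ is small enough; the same estimate handles the other hole of $C_1$ and both holes of $C_2$, giving $\tau_A(C_1)+\tau_A(C_2)>0$.

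The principal technical obstacle is that for a general diagonal $A$ the effective ``shell thickness'' $\beta_{jj}^{1/a}-\beta_{jj}^{1/a'}$ is direction-dependent, so the $GD_A$-analysis has to be carried out coordinate by coordinate and a correction term of size $O(1/|\log\beta_{\max}|)$ must be absorbed into the thickness estimate; this is what prevents a clean reduction to the scalar case of the remark. It is resolved by taking each inner parameter much smaller than its outer counterpart and the outer parameters small enough that every $A$-box sits well inside $B[0,1]$; these constraints are simultaneously satisfiable because shrinking all parameters jointly shrinks every $A$-box. The Euclidean-metric case is handled identically, with $A$-ellipsoids replacing $A$-boxes and the same single-direction straddling argument giving the $GD_A$ estimate.
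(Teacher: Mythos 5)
Your proposal is correct and is essentially the paper's own argument: explicit disjoint square-metric shells around two well-separated centres, nested so that each set meets a bounded gap and the unbounded gap of the other, with $S_A$ read off from the hole and $GD_A$ from the smallest $A$-box straddling the shell, and the Euclidean case deferred in the same way. The only cosmetic difference is that the paper replaces your second (inner) shell at each centre by a singleton placed inside the other set's gap and fixes explicit radii $1/4>r>s>t$, whereas you keep a symmetric four-shell configuration and argue asymptotically in the box-size parameters.
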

\begin{proof}
    We will construct and calculate the thickness of explicit examples in the square metric first.
    Let $r,s,t\in(0,1/4)$ such that $r>s>t$ to be determined.
    Let $x=(x_1,\dots,x_n)$ be such that $x_1=3/4$ and $x_i=0$ for $i\geq2$.
    We set 
    \begin{align*}
        C_1=\left(B[x,1/4]\setminus \open{B}[x,r]\right)\cup \{x\}\cup\left(B[-x,1/4]\setminus \open{B}[-x,r]\right)
    \text{ \, and \,}
        C_2=\left(B[x,s]\setminus \open{B}[x,t]\right)\cup \{-x\},
    \end{align*}
    these sets are illustrated below in $\R^2$ for the square metric.
    \begin{figure}[H]\label{pic: Counter examp}
    \centering
    \begin{tikzpicture}[x=0.625cm,y=0.625cm]

\draw (0,0) rectangle (8,8);
\draw (10,0) rectangle (18,8);
\draw (5,-10) rectangle (13,-2);

\draw [fill, cyan] (0,0+3) rectangle (2,2+3);
\draw [fill, white] (0.5,0.5+3) rectangle (1.5,1.5+3);
\draw [fill, cyan] (6,6-3) rectangle (8,8-3);
\draw [fill, white] (6.5,6.5-3) rectangle (7.5,7.5-3); 
\draw [fill,cyan] (7,7-3) circle (1.25pt);

\draw [fill, red] (16.625,6.625-3) rectangle (17.375,7.375-3);
\draw [fill, white] (16.845,6.845-3) rectangle (17.155,7.15-3);
\draw [fill,red] (11,1+3) circle (1.25pt);

\draw [fill, cyan] (5,-10+3) rectangle (7,-8+3);
\draw [fill, white] (5.5,-9.5+3) rectangle (6.5,1.5-10+3);
\draw [fill, cyan] (11,6-10-3) rectangle (13,8-10-3);
\draw [fill, white] (11.5,6.5-10-3) rectangle (12.5,7.5-10-3); 
\draw [fill, red] (16.625-5,6.625-10-3) rectangle (17.375-5,7.375-10-3);
\draw [fill, white] (16.845-5,6.845-10-3) rectangle (17.155-5,7.155-10-3);
\draw [fill,red] (11-5,1-10+3) circle (1.25pt);
\draw [fill,cyan] (12,7-10-3) circle (1.25pt);

\draw (4,-0.75) node {$C_1$};
\draw (14,-0.75) node {$C_2$};
\end{tikzpicture}
    \vspace{-1cm}
    \caption{The sets $C_1$ and $C_2$ both individually and together contained in $B[0,1]$ with circles denoting the singletons at $x$ and $-x$.}
    \end{figure}
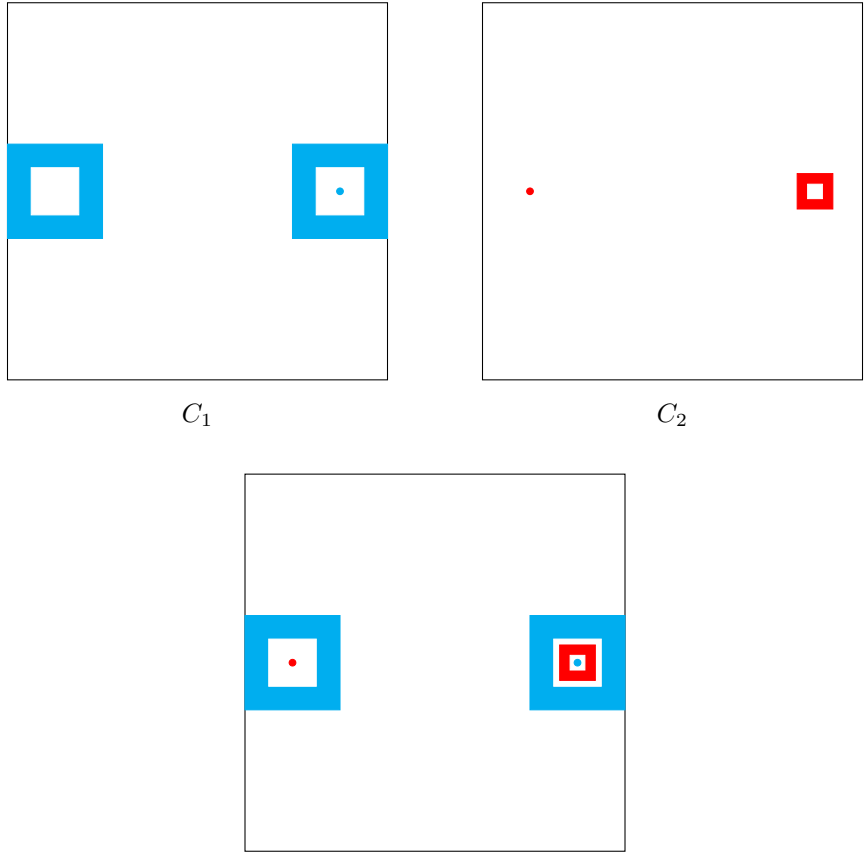
    It is clear that $C_1\cap C_2=\emptyset$ therefore, we require to find the thickness of $C_1$ and $C_2$ and show that neither of them is contained in a gap of the other.
    
    We can observe that the gaps of $C_1$ are $E^1=\R^n\setminus (B[x,1/4]\cup B[-x,1/4])$, $G^1_1=\open{B}[-x,r]$ and $G^1_2=\open{B}[x,r]\setminus \{x\}$ whereas the gaps of $C_2$ are $E^2=\R^n\setminus (B[x,s]\cup \{-x\})$ and $G^2_1=\open{B}[x,t]$.
    It is clear that $C_1\setminus (C_1\cap E^2)=\{x\}$ and $C_1\cap G^2_1=\{x\}$ hence $C_1$ is not contained in only one gap of $C_2$.
    Moreover, $C_2\cap G^1_1=\{-x\}$ and $C_2\cap G^2_1=B[x,s]\setminus \open{B}[x,t]$ hence $C_2$ is not contained in only one gap of $C_1$.

    We now find their size with respect to the matrix $A$.
    We can observe that $S_A(G_1^1)=S_A(G_2^1)$ satisfies $ \min_j\{\beta_{jj}\}^{1/S_A(G_1^1)}=r $ since this is when all the sides of the rectangle $A^{1/S_A(G_1^1)}(B[0,1])$ are larger than the sides of the gaps of $C_1$ and the smallest side is equal to one of the sides of the gaps.
    Hence,
    $$\frac{1}{S_A(G_1^1)}=\frac{1}{S_A(G_2^1)}=\log_{\min_j\{\beta_{jj}\}}(r)$$ and by similar reasoning $S_A(G_1^2)$ satisfies $$\frac{1}{S_A(G_1^2)}= \log_{\min\{\beta_{jj}\}}(t).$$
    Observe that the value $GD_A(1,C_1)=GD_A(2,C_1)$ satisfies  $$2\max\left\{\beta_{jj}\right\}^{1/GD_A(1,C_1)}=2\max\{\beta_{jj}\}^{1/GD_A(2,C_1)}= \frac{1}{4}-r$$ since $\frac{1}{4}-r$ is the smallest distance of the bounded gaps of $C_1$ from the unbounded gap of $C_1$ on each axis.
    Therefore, the largest side of the rectangle $A^{1/GD_A(1,C_1)}$ equals this distance.
    By a similar reasoning, $GD_A(1,C_2)$ satisfies $$\max_{j}\left\{\beta_{jj}\right\}^{1/GD_A(1,C_1)}= \frac{1}{2}(s-t).$$
    Hence $$\tau_A(C_1)=\log_{\min\{\beta_{jj}\}}(r)-\log_{\max\{\beta_{jj}\}} \left(\frac{1}{8}-\frac{r}{2}\right)  \text{ \, and \,} \tau_A(C_2)=\log_{\min\{\beta_{jj}\}}(t)-\log_{\max\{\beta_{jj}\}} \left(\frac{s}{2}-\frac{t}{2}\right).$$

    We now observe that for any $s$, we can make $t$ sufficiently small such that $\tau_A(C_2)> 0$ and moreover we can make $r$ sufficiently small such that $\tau_A(C_1)> 0$.
    Therefore, ensuring $r>s>t$, we can find values of $r,s,t$ such that $\tau_A(C_1)+\tau_A(C_2)>0$, and $C_1\cap C_2=\emptyset$.
    
    To complete the proof we simply note that this example in the square metric can be generalised for the Euclidean metric, however the calculations are not as pleasant.
\end{proof}
\begin{cor}\label{cor:CounterexampletoTHRM10}
    There exists sets $C_1,C_2\subseteq \R^n$ such that neither set lies in a gap of the other, $\tau(C_1)\tau(C_2)>1$ and $C_1\cap C_2\neq\emptyset$.
\end{cor}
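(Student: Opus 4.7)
The statement as worded is an existence claim asking for sets $C_1, C_2 \subseteq \R^n$ with (i) neither contained in a gap of the other, (ii) $\tau(C_1)\tau(C_2) > 1$, and (iii) $C_1 \cap C_2 \neq \emptyset$. The plan is simply to exhibit an explicit witness. I would take $C_1 = C_2 = B[0,1]$, which immediately makes (iii) hold with $C_1 \cap C_2 = B[0,1] \neq \emptyset$.

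To verify (ii), observe that $B[0,1]$ has no bounded complementary components (the index set $J$ is empty since $\R^n\setminus B[0,1]$ is connected for $n\geq 2$) and non-empty interior, so the second clause of \Cref{Def: YAV-FALC-NEWH thickness} gives $\tau(B[0,1])=\infty$. Hence $\tau(C_1)\tau(C_2)=\infty>1$. For (i), the only gap of $C_i=B[0,1]$ is the unbounded component $E=\R^n\setminus B[0,1]$, which is disjoint from $B[0,1]$ by construction, so neither set is contained in a gap of the other.

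The only step carrying any content is the appeal to the boundary case of \Cref{Def: YAV-FALC-NEWH thickness}; there is no genuine obstacle because the statement as written imposes non-empty intersection, which is trivially realised by any pair of overlapping sets with non-empty interior. If one prefers a non-degenerate witness where the two sets are distinct, the same argument works with $C_1=B[0,1]$ and $C_2=B[v,1]$ for any small translation vector $v$. I note in passing that a more substantive reading of the corollary—replacing $\neq\emptyset$ by $=\emptyset$—would give an actual counterexample to \autocite[Theorem 10]{falconer2022intersections-ThicknessBasedGame}, and would be proved by invoking \Cref{prop: counter examp} with the homothetic matrix $A=\beta I$ and converting affine thickness to Falconer--Yavicoli thickness via \Cref{rem: our thickness is similar to Yavicoli-Falconer thickness}, which gives $\tau(C_i)=\beta^{-\tau_A(C_i)}$ and hence $\tau(C_1)\tau(C_2)=\beta^{-(\tau_A(C_1)+\tau_A(C_2))}>1$ whenever the affine thicknesses sum to a positive quantity.
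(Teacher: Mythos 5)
The statement as printed contains a typo: the final condition should read $C_1\cap C_2=\emptyset$, as in the version of this corollary stated in the introduction --- the point is to contradict \autocite[Theorem 10]{falconer2022intersections-ThicknessBasedGame}, which asserts non-empty intersection under the first two hypotheses. You correctly diagnosed this. Your witness $C_1=C_2=B[0,1]$ does verify the literal statement (the computation $\tau(B[0,1])=\infty$ via the empty-gap clause of \Cref{Def: YAV-FALC-NEWH thickness} is fine, as is the check that neither set lies in the unbounded gap of the other), but it proves something vacuous and is not what the corollary is for. The substance is entirely in your closing aside, which is precisely the paper's own proof of the intended statement: take the sets from \Cref{prop: counter examp} with the homothetic matrix $A=\beta I$ and use \Cref{rem: our thickness is similar to Yavicoli-Falconer thickness} to convert $\tau_A(C_1)+\tau_A(C_2)>0$ into $\tau(C_1)\tau(C_2)=\beta^{-(\tau_A(C_1)+\tau_A(C_2))}>1$. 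So for the intended statement your route coincides with the paper's. The one point worth making explicit (which the paper also leaves implicit) is that \Cref{rem: our thickness is similar to Yavicoli-Falconer thickness} is stated under the hypothesis $\tau(C_i)>0$, so one should verify that the explicit sets constructed in \Cref{prop: counter examp} have positive Falconer--Yavicoli thickness before invoking the conversion.
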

\begin{proof}
    This follows by combination of Proposition~\ref{prop: counter examp} and \Cref{rem: our thickness is similar to Yavicoli-Falconer thickness}.
\end{proof}
This above proposition shows that in higher dimensions, a gap lemma using the natural cut out structure requires additional conditions.
A proof of an affine gap lemma in $\R^n$, under additional conditions, is the focus of the section.

\subsection{Gap lemma for BG linked pairs of sets}
We look to prove a gap lemma by adapting techniques presented by Falconer and Yavicoli in \cite{falconer2022intersections-ThicknessBasedGame}.
To do so we begin by making the following definition.
\begin{defin}\label{def: linked sets def}
    We say that gaps $U,V\subseteq \R^n$ are linked if $U\cap V\neq\emptyset$, $(\partial U)\setminus V\neq\emptyset$ and $(\partial V)\setminus
    U\neq\emptyset$.
    
    We say that compact sets $C_1$ and $C_2$ are BG linked if for every bounded gap $G^1$ of $C_1$ and bounded gap $G^2$ of $C_2$, $G^1$ and $G^2$ are linked or their intersection is empty.
\end{defin}
Sets that are BG linked are easier to handle in terms of their intersection properties, hence we study them first.
To do so we state and prove a useful fact about gap structures which is important to the following proofs.
\begin{lem}\label{lem: size go to zero or finite}
    Let $A$ be a diagonal matrix with entries $\beta_{11},\dots,\beta_{nn}\in (0,1)$.
    If $C\subseteq B[0,1]$ is a compact set with $\tau_A(C)\in (-\infty,\infty]$ then either there are finitely many gaps or $\lim\limits_{k\rightarrow\infty} S_A(G_k)= \lim\limits_{k\rightarrow\infty} \operatorname{diam}(G_k) =0$.
\end{lem}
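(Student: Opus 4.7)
The plan is to argue by contradiction: suppose there are infinitely many gaps and $S_A(G_k)\not\to 0$; I will show that this forces $\tau_A(C)=-\infty$. Since the gaps are ordered by non-increasing size with respect to $A$, the sequence $S_A(G_k)$ is non-increasing and bounded below by $0$, hence has a limit $L\geq 0$. Under the contradiction hypothesis $L>0$, which yields the uniform bound $S_A(G_k)^{-1}\leq 1/L$.

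The core step is a compactness argument. Pick $p_k\in G_k$ for each $k$; as $\bigcup_k G_k\subseteq B[0,1]$ is relatively compact, Bolzano--Weierstrass produces a strictly increasing index subsequence $(k_i)_{i\in\N}$ with $p_{k_i}\to p^*$. For any $i>i'$ we have $p_{k_{i'}}\in G_{k_{i'}}\subseteq \bigcup_{j<k_i} G_j$. Hence any translate $A^{1/t}(B[0,1])+z$ that contains both $p_{k_{i'}}$ and $p_{k_i}$ simultaneously meets $G_{k_i}$ and $\bigcup_{j<k_i}G_j$, witnessing $GD_A(k_i,C)\leq t$. In the square metric, containing both points forces $2\beta_{jj}^{1/t}\geq |(p_{k_i})_j-(p_{k_{i'}})_j|$ for each coordinate $j$; solving for $t$ yields an upper bound that tends to $0$ as the coordinate differences shrink (an identical bound using an inscribed Euclidean ball handles the Euclidean case). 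Taking $i'=i-1$ and using $p_{k_i}\to p^*$, these differences tend to zero, so $GD_A(k_i,C)\to 0$ and therefore $GD_A(k_i,C)^{-1}\to\infty$. Combining with the earlier bound, $\tau_A(C)\leq S_A(G_{k_i})^{-1}-GD_A(k_i,C)^{-1}\leq 1/L-GD_A(k_i,C)^{-1}\to -\infty$, contradicting $\tau_A(C)\in(-\infty,\infty]$. This establishes $S_A(G_k)\to 0$.

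For the companion limit, I would record the elementary identity $S_A(F)=\max_j \log(\beta_{jj})/\log(\Delta_j(F)/2)$ in the square metric (where $\Delta_j(F)$ denotes the extent of $F$ along coordinate $j$), with an analogous expression via the bounding ellipsoid in the Euclidean case. This identity shows that $S_A(G_k)\to 0$ if and only if every $\Delta_j(G_k)\to 0$, which is in turn equivalent to $\operatorname{diam}(G_k)\to 0$ in either metric, completing the proof.

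The main obstacle I anticipate is handling the two metrics uniformly when estimating the value of $t$ required for $A^{1/t}(B[0,1])+z$ to contain two prescribed points: in the square metric this is a coordinate-wise box condition, while in the Euclidean metric it is an ellipsoid condition. Reducing the ellipsoid case to an inscribed Euclidean ball of radius $\min_j \beta_{jj}^{1/t}$ sidesteps this cleanly and yields the same asymptotic $t\to 0$ as the chosen points coalesce, so the contradiction argument transports with no essential change.
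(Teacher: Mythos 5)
Your proof is correct and follows essentially the same route as the paper's: choose a point in each gap, use compactness of $B[0,1]$, and exploit the key fact that a block containing points of two distinct gaps bounds $GD_A(k,C)$ from above, combined with the defining inequality $\tau_A(C)\leq S_A(G_k)^{-1}-GD_A(k,C)^{-1}$. The only difference is organizational — the paper deduces a uniform separation of the chosen points and contradicts sequential compactness, while you extract a convergent subsequence and drive $GD_A(k_i,C)^{-1}\to\infty$ to force $\tau_A(C)=-\infty$ — and you spell out the equivalence $S_A(G_k)\to 0\iff\operatorname{diam}(G_k)\to 0$ slightly more explicitly than the paper does.
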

\begin{proof}
    Assume for a contradiction that there exists a set $C$ with infinitely many gaps and for all $k\in\N$ we have that $S_A(G_k)\geq c$ for some $c>0$.
    For each $k\in\N$, take some $x_k\in G_k$ and observe that $ S_A(\{x_i,x_{k}\})\geq GD_A(k,C) $ for each $i<k$.
    We can observe that for each $k\in\N$ and $i<k$, since $GD_A(k,C)>0$, we have that 
    $$\tau_A(C)-\frac{1}{c}\leq \tau_{A}(C)-S_A(G_{k})^{-1}\leq -GD_A(k,C)^{-1}.$$
    Therefore, $0<GD_A(k,C)^{-1}\leq 1/c-\tau_A(C)$ and hence $$0<(1/c-\tau_A(C))^{-1}\leq GD_A(k,C)\leq S_A(\{x_i,x_{k}\}).$$
    We finally observe that if there exists a constant $M$ such that for each $k\in\N$ and each $i<k$ the value $S_A(\{x_i,x_{k}\})>M$, the sequence $(x_{k})_{k\in\N}$ has no convergent subsequences since $S_A(F)$ is linked to $\operatorname{diam}(F)$.
    This is a contradiction of the sequential compactness of $\R^n\setminus C$.
\end{proof}
The following is a technical result which ultimately leads to a contradiction in the proof of the BG linked affine gap lemma and has been isolated as a separate result to make the proof as concise and clear as possible.
\begin{lem}\label{lem: Linked Sets Sequence Exists}
    Let $A$ be a diagonal matrix with entries $\beta_{11},\dots,\beta_{nn}\in (0,1)$, and let $C_1,C_2\subseteq B[0,1]$ be non-empty compact sets such that $C_1,C_2$ are BG linked with $\tau_A(C_1)+\tau_A(C_2)>0$.
    Assume further that $C_1\cap C_2=\emptyset$ and that there exists bounded gaps $G^1$ of $C_1$ such that $\partial G^1\not\subseteq E^2$ and $G^2$ of $C_2$ such that $\partial G^2\not\subseteq E^2$.
    
    Under these assumptions, there exists a sequence of pairs $(G_{s_i}^{1},G_{t_i}^{2})_{i\in\N}$ of linked bounded gaps, such that for $i\in\N$, $t_i,s_i\in\N$, moreover, either $s_{i+1}=s_i$ and $t_{i+1}>t_i$ or $s_{i+1}>s_i$ and $t_{i+1}=t_i$.
    In addition, at least one of $\operatorname{diam}(G_{s_i}^{1})\rightarrow 0$ or $\operatorname{diam}(G_{t_i}^{2})\rightarrow 0$ as $i\rightarrow\infty$.
\end{lem}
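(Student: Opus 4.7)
The plan is to prove the lemma by induction on $i$ to construct the sequence of linked pairs, and then invoke \Cref{lem: size go to zero or finite} to extract the diameter conclusion.

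For the base case, take $G^1_{s_1}$ to be the bounded gap of $C_1$ supplied by the hypothesis, so that $\partial G^1_{s_1}\not\subseteq E^2$. Because $G^1_{s_1}$ is a connected component of $\R^n\setminus C_1$, its boundary lies in $C_1$, which is disjoint from $C_2$; therefore every point of $\partial G^1_{s_1}$ lies in some gap of $C_2$, and by the hypothesis at least one such point lies in a bounded gap, which I label $G^2_{t_1}$. Any neighbourhood of such a point meets the open set $G^1_{s_1}$, so $G^1_{s_1}\cap G^2_{t_1}\neq\emptyset$, and BG linkedness promotes this intersection to a linking of the pair $(G^1_{s_1},G^2_{t_1})$.

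For the inductive step, assume $(G^1_{s_i},G^2_{t_i})$ is linked. Since $\partial G^1_{s_i}\subseteq C_1$ and $\partial G^2_{t_i}\subseteq C_2$, these boundaries are covered by the gaps of $C_2$ and $C_1$ respectively, and by linkedness both $\partial G^1_{s_i}\setminus G^2_{t_i}$ and $\partial G^2_{t_i}\setminus G^1_{s_i}$ are non-empty. The key dichotomy I would establish is that at least one of these portions meets a bounded gap distinct from the one already in hand: either $\partial G^2_{t_i}\setminus G^1_{s_i}$ meets some bounded $G^1_{k}$ with $k\neq s_i$, or $\partial G^1_{s_i}\setminus G^2_{t_i}$ meets some bounded $G^2_{k}$ with $k\neq t_i$. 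In either case, BG linkedness automatically makes the new pair linked. Picking the smallest such $k$ on the chosen side and relabelling gaps of equal size if needed (which the remark following \Cref{def: affine thickness and gap distance def} permits), one obtains the required $s_{i+1}>s_i$ or $t_{i+1}>t_i$ with the other coordinate left unchanged.

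The main obstacle is the dichotomy just described; I would prove it by contradiction. Failure of both alternatives would give $\partial G^1_{s_i}\subseteq G^2_{t_i}\cup E^2$ and $\partial G^2_{t_i}\subseteq G^1_{s_i}\cup E^1$. Writing $U=G^1_{s_i}\cup G^2_{t_i}$, the boundary $\partial U$ is contained in $(\partial G^1_{s_i}\setminus G^2_{t_i})\cup(\partial G^2_{t_i}\setminus G^1_{s_i})\subseteq E^1\cup E^2$, while simultaneously $\partial U\subseteq C_1\cup C_2$, forcing $\partial U\subseteq (C_1\cap E^2)\cup(C_2\cap E^1)$. Using that $U$ is a bounded open set, $C_1,C_2$ are compact and disjoint, and $E^1,E^2$ are the unbounded components, I would chase paths between the interior of $U$, the complement of $\overline{U}$, and infinity via $E^1\cap E^2$ to produce either a point of $C_1\cap C_2$ or a failure of BG linkedness on a nested bounded gap pair, contradicting the hypotheses. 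Finally, for the diameter conclusion, each step strictly increases exactly one of $s_i$ or $t_i$, so $s_i+t_i\to\infty$; without loss of generality $s_i\to\infty$, in which case $C_1$ has infinitely many bounded gaps, and because $\tau_A(C_1)+\tau_A(C_2)>0$ forces both thicknesses into $(-\infty,\infty]$, \Cref{lem: size go to zero or finite} gives $\lim_{k\to\infty}S_A(G^1_k)=\lim_{k\to\infty}\operatorname{diam}(G^1_k)=0$, and hence $\operatorname{diam}(G^1_{s_i})\to 0$ as required.
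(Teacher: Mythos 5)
Your base case is fine and matches the paper's, but the inductive step has a genuine gap, in two places. First, your ``key dichotomy'' (that one of $\partial G^2_{t_i}\setminus G^1_{s_i}$ or $\partial G^1_{s_i}\setminus G^2_{t_i}$ must meet a \emph{bounded} gap other than the current one) is only sketched, and the sketch never uses the hypothesis $\tau_A(C_1)+\tau_A(C_2)>0$. That hypothesis is precisely what drives the paper's induction: from it one deduces that $GD_A(s,C_1)\leq S_A(G^2_t)$ and $GD_A(t,C_2)\leq S_A(G^1_s)$ cannot hold simultaneously (\eqref{eq: no both hold} in the paper), and then, say in the case $GD_A(s_i,C_1)>S_A(G^2_{t_i})$, the closure of $G^2_{t_i}$ cannot meet $E^1$ or any $G^1_j$ with $j<s_i$, so the gap of $C_1$ containing a point of $\partial G^2_{t_i}\setminus G^1_{s_i}$ is forced to be bounded. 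A purely topological ``path chase'' through $E^1\cap E^2$ cannot substitute for this: for linked gaps it is perfectly consistent (e.g.\ two interlocked ring-shaped gaps in $\R^3$) that $\partial G^1_{s_i}\subseteq G^2_{t_i}\cup E^2$ and $\partial G^2_{t_i}\subseteq G^1_{s_i}\cup E^1$ while $C_1\cap C_2=\emptyset$ and BG linkedness holds, so no contradiction of the kind you describe is available without a quantitative input; whether such a configuration is compatible with the thickness hypothesis is exactly the point your argument leaves open.

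Second, even granting your dichotomy, you only obtain a bounded gap with index $k\neq s_i$ (or $k\neq t_i$), whereas the lemma requires $s_{i+1}>s_i$ or $t_{i+1}>t_i$. ``Picking the smallest such $k$ and relabelling gaps of equal size'' does not deliver this: the new gap could simply be of strictly larger $S_A$-size, hence smaller index, and relabelling ties cannot change that. The strict increase in the paper again comes from the size/gap-distance comparison: since $\overline{G^2_{t_i}}$ misses $E^1$ and all $G^1_j$ with $j<s_i$, and the chosen boundary point avoids $G^1_{s_i}$, the containing gap necessarily has index $>s_i$ (symmetrically in the other case). Without strict monotonicity the indices need not tend to infinity, so your final appeal to \Cref{lem: size go to zero or finite} (which is otherwise the same as the paper's concluding step) does not yield $\operatorname{diam}(G^1_{s_i})\to 0$ or $\operatorname{diam}(G^2_{t_i})\to 0$.
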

\begin{proof}
    Let $(G_k^{1})_{k\in J_1}, E^{1}$ and $(G_k^{2})_{k\in J_2}, E^{2}$ be the gaps of $C_1$ and $C_2$ respectively.
    Let $\tau_1=\tau_A(C_1)$ and $\tau_2=\tau_A(C_2)$.
    Note that $\tau_1+\tau_2>0$ implies that neither take the value of $-\infty$.
    
    Observe that for $s\in J_1$ and $t\in J_2$ the following cannot hold simultaneously:
    \begin{equation}\label{eq: no both hold}
    GD_A(s,C_1)\leq S_A(G_t^{2}) \text{ \, and \, } GD_A(t,C_2)\leq S_A(G_s^{1}).
    \end{equation}
    Since, if both were to hold then
    $$GD_A(s,C_1)^{-1}\geq S_A(G_t^{2})^{-1} \text{ \, and \, } GD_A(t,C_2)^{-1}\geq S_A(G_s^{1})^{-1},$$
    which implies that
    $$ S_A(G_s^{1})^{-1}-\tau_1\geq S_A(G_t^{2})^{-1} \text{ \, and \, }  S_A(G_t^{2})^{-1}-\tau_2\geq S_A(G_s^{1})^{-1}. $$
    Combining these inequalities we would have that
    $$ S_A(G_s^{1})^{-1}-\tau_1-\tau_2\geq S_A(G_s^{1})^{-1},$$
    which would be a contradiction since $\tau_1+\tau_2>0$. 

    We will now construct a sequence of pairs $(G_{s_i}^{1},G_{t_i}^{2})_{i\in\N}$ of bounded linked gaps through induction, such that for $i\in\N$, $t_i,s_i\in\N$, moreover, either $s_{i+1}=s_i$ and $t_{i+1}>t_i$ or $s_{i+1}>s_i$ and $t_{i+1}=t_i$.

    Firstly we will define $(G^{1}_{s_1},G^{2}_{t_1})$.
    Assume for a contradiction that there are no pairs of linked bounded gaps and hence, since $C_1$ and $C_2$ are BG linked, we have that for each pair of bounded gaps $G^1$ of $C_1$ and $G^2$ of $C_2$ their intersection is empty.
    Consider the gap $G^1$ of $C_1$ with $\partial G^1\not\subseteq E^2$, ensured by our assumptions, since $C_1\cap C_2=\emptyset$ we have that
    \begin{align*}
        \emptyset\neq\partial G^1\setminus E^2\subseteq \bigcup_{j\in J_2} G_j^2.
    \end{align*}
    Now for some $x\in \partial G^1\setminus E^2$, since $\bigcup_{j\in J_2} G_j^2$ is open, there exists $r>0$ such that $\open{B}[x,r]\subseteq \bigcup_{j\in J_2} G_j^2$ moreover, by definition of the boundary $\open{B}[x,r]\cap G^1 \neq\emptyset$.
    Therefore 
    \begin{align*}
        G^1\cap \bigcup_{j\in J_2} G_j^2\neq \emptyset.
    \end{align*}
    a contradiction as $G^1\cap G_j^2=\emptyset$ for all $j\in J_2$.
    Hence, there exist a pair of linked bounded gaps.
    Denote this pair $(G^1_{s_1},G^2_{t_1})$.

    Let $k\in\N$ be given and let $(G^1_{s_k},G_{t_k}^2)$ be a pair of linked bounded gaps of $C_1$ and $C_2$ defined inductively.
    Since $(G^1_{s_k},G_{t_k}^2)$ are linked gaps there exists some $x\in \partial G^1_{s_k}\setminus G_{t_k}^2$ and since $x\in\partial G^1_{s_k}$ implies that $x\in C_1$ there exists a gap $V$ of $C_2$ such that $x\in V$.
    The sets $(G^1_{s_k},V)$ intersect.
    Through the same argument there exists $y\in\partial G_{t_k}^2\setminus G_{s_k}^1$ and gap $U$ of $C_1$ such that $y\in U$.
    Again, the sets $(U, G_{t_k}^2)$ intersect.
    
    If $GD_A(s_{k},C_1) > S_A(G_{t_k}^{2})$ we note that since $(G^1_{s_{k}},G_{t_{k}}^2)$ are linked sets and $GD_A(s_{k},C_1) > S_A(G_{t_k}^{2})$, we have that the closure of $G_{t_{k}}^2$ does not intersect $E_1$ or $G_{i}^{1}$ for any $i = 1,\dots, s_k-1$.
    Therefore, we have that $y\in U$ defined previously is such that $y\in \partial G^{2}_{t_{k}}\setminus G^{1}_{s_{k}}\subseteq (\bigcup_{i=1}^{s_k} G_i^{1}\cup E_1)^C\cap C_1^C$ and hence is a bounded gap. 
    In this case we set $(G^1_{s_{k+1}},G_{t_{k+1}}^2) = (U, G_{t_{k}}^2)$ and observe that $s_{k+1}>s_k$ and $t_{k+1}=t_k$ as required, moreover since the sets intersect and are bounded they are linked.
     
    If $GD_A(s_k,C_1) \leq S_A(G_{t_k}^{2})$ we observe that  by \eqref{eq: no both hold} we have that $GD_A(t_k,C_2) > S_A(G_{s_k}^{1})$ and hence by an argument analogous to the above we have that $V$ is a bounded gap. 
    In this case we set $(G^1_{s_{k+1}},G_{t_{k+1}}^2) = (G_{s_k}^1, V)$ and observe that $s_{k+1}=s_k$ and $t_{k+1}>t_k$ as required, moreover since the sets intersect and are bounded they are linked.

    Now we have defined the sequence $(G_{s_i}^{1},G_{t_i}^{2})_{i\in\N}$ we verify that at least one of $\operatorname{diam}(G_{s_i}^{1})\rightarrow 0$ or $\operatorname{diam}(G_{t_i}^{2})\rightarrow 0$.
    Note that the defined sequence implies that at least one of the gaps sets is countable.
    The claim is verified in the following two cases:

    \underline{Case (I): $J_1$ and $J_2$ are countable.}
    In this case, we have that by Lemma~\ref{lem: size go to zero or finite}, $\operatorname{diam} (G_{k}^1)\rightarrow 0$ and $\operatorname{diam} (G_{k}^2)\rightarrow 0$ as $k\rightarrow\infty$. 
    Moreover, by construction we have that at least one of $(s_k)_{k\in\N}\subseteq \N$ or $(t_k)_{k\in\N}\subseteq \N$ increase infinitely often and hence at least one of $\operatorname{diam}(G^1_{s_k})$ and $\operatorname{diam}(G^2_{t_k})$ tends to zero.

    \underline{Case (II): One of the sets $J_1,J_2$ is finite.}
    We have that $(s_k)\subseteq J_1$, $(t_k)\subseteq J_2$ and by construction either $s_{i+1}=s_i$ and $t_{i+1}>t_i$ or $s_{i+1}>s_i$ and $t_{i+1}=t_i$.
    This combined with the fact that one of $J_1$ or $J_2$ is finite implies that there exists some large $K\in\N$ such that  $s_{k+1}=s_K$ and $t_{k+1}>t_k$ for each $k\geq K$ or $s_{k+1}>s_k$ and $t_{k+1}=t_K$ for each $k\geq K$.
    Hence, one of $(s_k)_{k\in\N}\subseteq \N$ or $(t_k)_{k\in\N}\subseteq \N$ increase infinitely often and hence, by Lemma~\ref{lem: size go to zero or finite}, one of $\operatorname{diam}(G^1_{s_k})$ and $\operatorname{diam}(G^2_{t_k})$ tends to zero.    
\end{proof}
\begin{prop}[BG linked affine gap lemma]\label{thrm: BG linked Affine Gap Lemma}
    Let $A$ be a diagonal matrix with entries $\beta_{11},\dots,\beta_{nn}\in (0,1)$ and let $C_1,C_2\subseteq B[0,1]$ be a BG linked pair of sets such that there exists bounded gaps $G^1$ of $C_1$ such that $\partial G^1\not\subseteq E^2$ and $G^2$ of $C_2$ such that $\partial G^2\not\subseteq E^1$. If $\tau_A(C_1)+\tau_A(C_2)>0$, then $C_1\cap C_2\neq\emptyset$.
\end{prop}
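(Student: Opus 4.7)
The plan is to argue by contradiction, invoking \Cref{lem: Linked Sets Sequence Exists} directly. Assuming $C_1\cap C_2=\emptyset$, all of the hypotheses of that lemma are satisfied, so one obtains a sequence $(G^1_{s_i},G^2_{t_i})_{i\in\N}$ of linked bounded gap pairs in which exactly one of the indices strictly increases at each step, and along which at least one of $\operatorname{diam}(G^1_{s_i})$ or $\operatorname{diam}(G^2_{t_i})$ tends to zero. Since the hypotheses on $C_1$ and $C_2$ are symmetric, I may assume $\operatorname{diam}(G^1_{s_i})\to 0$, so in particular $s_i\to\infty$.

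Next, I would exploit the linked property to harvest two approximants of a common limit point. For each $i$, the definition of linked supplies a choice $p_i\in G^1_{s_i}\cap G^2_{t_i}$ together with a boundary point $z_i\in \partial G^1_{s_i}\setminus G^2_{t_i}$. Then $z_i\in C_1$ and $|p_i-z_i|\le \operatorname{diam}(G^1_{s_i})\to 0$. Passing to a convergent subsequence using compactness of $B[0,1]$, both $(p_i)$ and $(z_i)$ converge to a common point $p$, and closedness of $C_1$ forces $p\in C_1$. The argument now reduces to showing $p\in C_2$.

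For this I would split on the behaviour of $(t_i)$, which by construction is either eventually constant or tends to $\infty$. If $t_i\to\infty$, then \Cref{lem: size go to zero or finite} forces $\operatorname{diam}(G^2_{t_i})\to 0$, so any choice $q_i\in \partial G^2_{t_i}\subseteq C_2$ satisfies $|q_i-p_i|\to 0$, and closedness of $C_2$ delivers $p\in C_2$. If instead $(t_i)$ is eventually constant at some $t^{\ast}$, the fixed open set $G^2_{t^{\ast}}$ contains each $p_i$ while its complement contains each $z_i$, so $p$ lies simultaneously in $\overline{G^2_{t^{\ast}}}$ and in $\overline{(G^2_{t^{\ast}})^{c}}$, hence in $\partial G^2_{t^{\ast}}\subseteq C_2$. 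In either sub-case $p\in C_1\cap C_2$, contradicting the standing assumption.

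The main step, and the one \Cref{lem: Linked Sets Sequence Exists} is engineered to support, is extracting the \emph{exterior} approximant $z_i$ alongside the interior approximant $p_i$: this is what converts the linked structure into a topological sandwich that pins $p$ to $\partial G^2_{t^{\ast}}$ in the potentially problematic static case where $(t_i)$ stabilises. The shrinking sub-case is then a routine boundary-approximation argument.
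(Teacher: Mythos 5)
Your argument is correct, and it shares the paper's skeleton --- contradiction, an appeal to \Cref{lem: Linked Sets Sequence Exists}, and a compactness extraction of a common limit point of $C_1$ and $C_2$ --- but it diverges at the crux. The paper, for every $k$, produces a point $y_k\in G^1_{s_k}\cap\partial G^2_{t_k}\subseteq C_2$ lying \emph{inside} the shrinking gap $G^1_{s_k}$; the nontrivial step there is showing this intersection is nonempty, which the paper does with a connectedness (clopen) argument: if $\partial G^2_{t_k}$ missed $\overline{G^1_{s_k}}$, then $\overline{G^1_{s_k}}\setminus G^2_{t_k}$ would be a proper clopen subset of the connected set $\overline{G^1_{s_k}}$, forcing $\overline{G^1_{s_k}}\subseteq G^2_{t_k}$ and contradicting linkedness. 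You avoid this step entirely: you take the points $p_i\in G^1_{s_i}\cap G^2_{t_i}$ and $z_i\in\partial G^1_{s_i}\setminus G^2_{t_i}$ supplied directly by the definition of linked gaps, and then pin the limit $p$ into $C_2$ by the dichotomy on the monotone sequence $(t_i)$ --- a second invocation of \Cref{lem: size go to zero or finite} when $t_i\to\infty$, and the closed/open sandwich (limits of the $p_i$ and of the $z_i$) forcing $p\in\partial G^2_{t^\ast}\subseteq C_2$ when $(t_i)$ stabilises. Your route is somewhat more elementary, using only that gaps are open and that their boundaries lie in the respective compact sets, at the cost of a case split and an extra use of the diameter lemma; the paper's route yields the slightly stronger per-index fact that $C_2$ meets $\overline{G^1_{s_k}}$ for every $k$, which makes its limiting step immediate without any case analysis. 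Both are valid proofs of the proposition.
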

\begin{proof}
    Assume for a contradiction that $C_1\cap C_2=\emptyset$.
    We satisfy the conditions of \Cref{lem: Linked Sets Sequence Exists} and hence there exists a sequence of pairs $(G_{s_i}^{1},G_{t_i}^{2})_{i\in\N}$ of linked bounded gaps, such that for $i\in\N$, $t_i,s_i\in\N$, moreover, either $s_{i+1}=s_i$ and $t_{i+1}>t_i$ or $s_{i+1}>s_i$ and $t_{i+1}=t_i$.
    In addition, at least one of $\operatorname{diam}(G_{s_i}^{1})\rightarrow 0$ or $\operatorname{diam}(G_{t_i}^{2})\rightarrow 0$ as $i\rightarrow\infty$.

    Assume without loss of generality that $\operatorname{diam}(G_{s_i}^{1})\rightarrow 0$.
    Taking $x_k\in\partial G^1_{s_k}\subseteq C_1$ and $y_k\in G^1_{s_k}\cap \partial G^2_{t_k}\subseteq C_2$, we have that $d(x_k,y_k)\leq \operatorname{diam}(\tilde{G}^1_{t_k})$ and hence
    \begin{align}\label{eq: somewhat Cauchy stuff}
    \lim_{k\rightarrow\infty} d(x_k,y_k)= \lim_{k\rightarrow\infty}\operatorname{diam}(\tilde{G}^1_{t_k})=0.
    \end{align}
    Note that the point $y_k$ exists since if $G^1_{s_k}\cap \partial G^2_{t_k}=\emptyset$ then $\overline{G^1_{s_k}}\cap \partial G^2_{t_k}=\emptyset$ since $\partial G^1_{s_k}\cap \partial G^2_{t_k}=\emptyset$.
    Hence for $z\in \overline{G^1_{s_k}}$ we have that $z\in G^2_{t_k}$ or $z\notin \overline{G^2_{t_k}}$.
    Therefore, since $G^1_{s_k}\cap G^2_{t_k}\neq\emptyset$, we have that \begin{align}\label{clopenanddat}
    \overline{G^1_{s_k}}\supsetneqq\overline{G^1_{s_k}}\setminus G^2_{t_k}=\overline{G^1_{s_k}}\setminus ( \overline{G^1_{s_k}}\cap G^2_{t_k})=\overline{G^1_{s_k}}\setminus \overline{G^2_{t_k}}=\overline{G^1_{s_k}}\setminus (\overline{G^1_{s_k}}\cap\overline{G^2_{t_k}}).\end{align}
    We can observe that since $G_{s_k}^1$ is path-connected and therefore connected, it's closure $\overline{G^1_{s_k}}$ is connected and hence it's only clopen sets in the subspace topology are itself and the empty-set.
    We can see by \eqref{clopenanddat} that $\overline{G^1_{s_k}}\setminus G^2_{t_k}$ is clopen and not $\overline{G^1_{s_k}}$ and hence $\overline{G^1_{s_k}}\setminus G^2_{t_k}=\emptyset$, therefore $\overline{G^1_{s_k}}\subseteq G^2_{t_k}$, a contradiction as they are linked sets. 
    
    Therefore, since $(x_k)_{k\in\N}\subseteq \tilde{C}_1$ there exists a convergent subsequence $(x_{k_j})_{j\in\N}$ such that $x_{k_j}\rightarrow x\in \tilde{C}_1$ as $j\rightarrow \infty$.
    Moreover, by Equation~\eqref{eq: somewhat Cauchy stuff} the subsequence $(y_{k_j})_{j\in\N}\subseteq \tilde{C}_2$ is such that $y_{k_j}\rightarrow x$ and hence $x\in \tilde{C}_2$.
    Therefore, $\tilde{C}_1\cap \tilde{C}_2\neq \emptyset$ and hence we have a contradiction.

    To conclude, assuming that $C_1\cap C_2=\emptyset$ alongside the conditions of the Theorem, implies that $C_1\cap C\neq\emptyset$.
    This is a contradiction and hence $C_1\cap C_2\neq\emptyset$.
\end{proof}
The BG gap lemma is a strong tool, however the condition of linked bounded gaps is reasonably restrictive.
In the unit interval it is possible to convert a pair of compact sets $C_1$ and $C_2$ into a pair of BG linked sets $\tilde{C}_1,\tilde{C_2}$ without decreasing thickness and having $C_1\cap C_2=\tilde{C}_1\cap \tilde{C}_2$.
This is the approach to prove the Newhouse gap lemma.
Unfortunately in higher dimensions more consideration is needed, as shown by \Cref{prop: counter examp}.
This motivates the following definition.

\begin{defin}\label{def: strongrefinable}
     Let $A$ be a diagonal matrix with entries $\beta_{11},\dots,\beta_{nn}\in (0,1)$.
     We say that a pair of compact sets $C_1,C_2\subset B[0,1]$ is a strongly refinable pair for the matrix $A$ if there exists compact sets $\tilde{C}_1,\tilde{C}_2\subseteq B[0,1]$ such that the following hold:
    \begin{enumerate}
        \item $\tilde{C}_1$ and $\tilde{C}_2$ are BG linked sets.
        \item There exists bounded gaps $\tilde{G}^1$ of $\tilde{C}_1$ such that $\partial \tilde{G}^1\not\subseteq \tilde{E}^2$ and $\tilde{G}^2$ of $\tilde{C}_2$ such that $\partial \tilde{G}^2\not\subseteq \tilde{E}^1$.
        \item $\tau_A(\tilde{C}_1)+\tau_A(\tilde{C}_2)\geq \tau_A(C_1)+\tau_A(C_2)$.
        \item $\tilde{C}_1\cap \tilde{C_2}\subseteq C_1\cap C_2$.
    \end{enumerate}
\end{defin}
Hence we can restate \Cref{thrm: BG linked Affine Gap Lemma} to obtain the gap lemma stated in the introduction.
\begin{thrm}[Affine Gap Lemma]\label{thrm: Affine Gap Lemma}
    Let $A$ be a diagonal matrix with entries $\beta_{11},\dots,\beta_{nn}\in (0,1)$ and let $C_1,C_2\subseteq B[0,1]$ be a strongly refinable pair, non-empty and compact. If $\tau_A(C_1)+\tau_A(C_2)>0$, then $C_1\cap C_2\neq\emptyset$.
\end{thrm}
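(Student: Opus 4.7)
The plan is to observe that this theorem is essentially a repackaging of \Cref{thrm: BG linked Affine Gap Lemma} via the defining properties of a strongly refinable pair, so the proof amounts to verifying that each hypothesis of the BG linked version is granted by the definition. The bulk of the technical work is already encapsulated in the BG linked affine gap lemma together with \Cref{def: strongrefinable}, so what remains is a clean chain of implications.

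First I would invoke \Cref{def: strongrefinable} to obtain compact sets $\tilde{C}_1,\tilde{C}_2\subseteq B[0,1]$ satisfying properties (1)--(4). Property (3) together with the hypothesis $\tau_A(C_1)+\tau_A(C_2)>0$ yields $\tau_A(\tilde{C}_1)+\tau_A(\tilde{C}_2)>0$, so in particular neither affine thickness is $-\infty$. Properties (1) and (2) state precisely that $\tilde{C}_1,\tilde{C}_2$ are a BG linked pair admitting bounded gaps $\tilde{G}^1$ of $\tilde{C}_1$ with $\partial \tilde{G}^1\not\subseteq \tilde{E}^2$ and $\tilde{G}^2$ of $\tilde{C}_2$ with $\partial \tilde{G}^2\not\subseteq \tilde{E}^1$. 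Thus $\tilde{C}_1,\tilde{C}_2$ satisfy all the hypotheses of \Cref{thrm: BG linked Affine Gap Lemma}.

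Applying \Cref{thrm: BG linked Affine Gap Lemma} to $\tilde{C}_1,\tilde{C}_2$ then gives $\tilde{C}_1\cap \tilde{C}_2\neq\emptyset$. Finally, property (4) of \Cref{def: strongrefinable} says $\tilde{C}_1\cap \tilde{C}_2\subseteq C_1\cap C_2$, so we conclude $C_1\cap C_2\neq\emptyset$, as required.

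There is no real obstacle here: the conceptual difficulty was isolated into the BG linked affine gap lemma (whose sequence-building argument via \Cref{lem: Linked Sets Sequence Exists} and \Cref{lem: size go to zero or finite} is the substantive content) and into the definition of strongly refinable pairs, which was engineered precisely so that this final reduction is automatic. The only thing to be careful about is that strict inequality $\tau_A(C_1)+\tau_A(C_2)>0$ transfers through the non-strict inequality in property (3), which it does trivially.
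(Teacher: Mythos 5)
Your proposal is correct and follows exactly the paper's route: extract $\tilde{C}_1,\tilde{C}_2$ from the definition of a strongly refinable pair, check (via properties (1)--(3)) that they satisfy the hypotheses of the BG linked affine gap lemma, and conclude through property (4) that $\emptyset\neq\tilde{C}_1\cap\tilde{C}_2\subseteq C_1\cap C_2$. The paper's own proof is just a terser version of the same two-step reduction.
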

\begin{proof}
    Since $C_1$ and $C_2$ are a strongly refinable pair, we have that there exists compact sets $\tilde{C}_1$, $\tilde{C}_2$ satisfying the conditions of \Cref{thrm: BG linked Affine Gap Lemma}.
    Hence $\emptyset\neq\tilde{C}_1\cap\tilde{C}_2\subseteq C_1\cap C_2$.
\end{proof}

\subsection{A Condition for strong refinability}
We now look to determine a reasonable class of strongly refinable pairs of sets, in which the affine gap lemma can be applied.
We note that this class should be easily verifiable, indeed this ease of use is one of the strengths of gap lemmas.
The following lemma is the first step towards this class, proving that when gaps are contained in other gaps we can remove them whilst keeping desirable conditions.
\begin{lem}\label{lem: contained gaps are fine}
    Let $A$ be a diagonal matrix with entries $\beta_{11},\dots,\beta_{nn}\in (0,1)$ and let $C_1,C_2\subseteq B[0,1]$ be non-empty compact sets.
    There exists compact sets $C_1'$ and $C_2'$ such that the following hold:
    \begin{itemize}
        \item $C_1\cap C_2=C_1'\cap C_2'$;
        \item $\tau_A(C_1')+\tau_A(C_2')\geq \tau_A(C_1)+\tau_A(C_2)$;
        \item No bounded gap ${G^1}'$ of $C_1'$ is such that $\overline{{G^1}'}\subseteq {G^2}'$ for any bounded gap ${G^2}'$ of $C_2'$ and no bounded gap ${G^2}'$ of $C_2'$ is such that $\overline{{G^2}'}\subseteq {G^1}'$ for any bounded gap ${G^1}'$ of $C_1'$.
    \end{itemize}
\end{lem}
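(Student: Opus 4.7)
The plan is to simultaneously fill in every bounded gap of $C_1$ whose closure sits inside some bounded gap of $C_2$, and symmetrically. Concretely, I would set
\[
X_1 = \bigcup\{\overline{G}: G\text{ bounded gap of }C_1,\ \overline{G}\subseteq H\text{ for some bounded gap }H\text{ of }C_2\},
\]
define $X_2$ analogously with $C_1,C_2$ swapped, and take $C_1'=C_1\cup X_1$, $C_2'=C_2\cup X_2$. The third bullet is then immediate from the construction: a bounded gap of $C_i'$ is a bounded gap of $C_i$ not absorbed into $X_i$, so it cannot fit in closure inside a bounded gap of $C_{3-i}'$ (itself a bounded gap of $C_{3-i}$), else it would already have been placed in $X_i$.

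For $C_1'\cap C_2'=C_1\cap C_2$, note that $X_1$ lies inside the union of bounded gaps of $C_2$ (hence $X_1\cap C_2=\emptyset$), and symmetrically $X_2\cap C_1=\emptyset$; it remains to rule out $X_1\cap X_2\neq\emptyset$. For this I would pick $x\in\overline{G^1}\cap\overline{G^2_*}$ with $\overline{G^1}\subseteq G^2$ and $\overline{G^2_*}\subseteq G^1_*$, and split by whether $x$ lies in the open gap or on the boundary. If $x\in\partial G^1\subseteq C_1$, then also $x\in\overline{G^2_*}\subseteq G^1_*$, a gap of $C_1$, contradicting $G^1_*\cap C_1=\emptyset$; symmetrically $x\notin\partial G^2_*$. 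The surviving case $x\in G^1\cap G^2_*$ forces $G^1=G^1_*$ and $G^2=G^2_*$ by disjointness of distinct gaps, and then $\overline{G^1}\subseteq G^2\subseteq\overline{G^2}\subseteq G^1$ makes the bounded non-empty open set $G^1$ clopen in the connected space $\R^n$, which is impossible. Compactness of $C_1'$ is then verified using \Cref{lem: size go to zero or finite}: the diameters of distinct bounded gaps of $C_1$ tend to zero, so any convergent sequence in $X_1$ drawn from infinitely many distinct $\overline{G}$ is also a limit of boundary points in $C_1$, which is closed.

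For the thickness inequality, since $\partial G\subseteq C_1$ for every bounded gap $G$ of $C_1$, subtracting $\overline{G}$ from $\R^n\setminus C_1$ coincides with deleting the open component $G$ itself; hence the gaps of $C_1'$ are exactly the bounded gaps of $C_1$ outside $X_1$, together with the unchanged $E^1$. The size $S_A$ of any surviving gap is intrinsic to that gap and unaffected, while $GD_A(\cdot,C_1')$ is the infimum of $t$ over a condition whose target set is a union of earlier-indexed gaps together with $E^1$; this target has only shrunk, so the set of admissible $t$ shrinks and the infimum can only grow. Consequently each term $S_A(G)^{-1}-GD_A(\cdot,C_1')^{-1}$ is at least the corresponding term for $C_1$, and one then takes the infimum over a subcollection of gaps, yielding $\tau_A(C_1')\geq\tau_A(C_1)$; the same argument gives $\tau_A(C_2')\geq\tau_A(C_2)$, and summing provides the second bullet.

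The main obstacle is the disjointness $X_1\cap X_2=\emptyset$: without it, the two independent filling operations could manufacture new intersection points outside $C_1\cap C_2$, destroying the first bullet. The argument genuinely uses the connectedness of $\R^n$ in the clopen step, and the boundary sub-cases pivot on the fact that each filled-in gap's closure lies strictly inside, and so bounded away from the set bounding, a gap of the other set.
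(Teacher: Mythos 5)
Your construction is correct and is essentially the paper's: both proofs fill in every bounded gap of one set whose closure lies inside a bounded gap of the other, observe that the surviving gaps are a subcollection of the original gaps with unchanged $S_A$ and non-decreased $GD_A$ (so thickness does not drop), and read off the third bullet from the fact that any offending gap would already have been filled. The one real difference is that you perform both fillings \emph{simultaneously}, which obliges you to prove $X_1\cap X_2=\emptyset$; your case analysis (a boundary point of a filled gap would lie in a gap of its own set, and an interior point forces $G^1=G^1_*$, $G^2=G^2_*$ and hence a non-trivial bounded clopen subset of $\R^n$) is a valid way to do this. The paper instead fills sequentially, building $C_1'$ from the gaps of $C_2$ and then $C_2'$ from the gaps of $C_1'$, so the second filled region automatically avoids all of $C_1'$ and no disjointness argument is needed; the small price is having to note that gaps of $C_2'$ are still gaps of $C_2$, so the containment condition holds for the pair $(C_1',C_2')$. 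One blemish in your write-up: do not route the compactness of $C_1'$ through \Cref{lem: size go to zero or finite}, since that lemma assumes $\tau_A(C)\in(-\infty,\infty]$, which is not a hypothesis of the present statement (and gap diameters need not tend to zero when $\tau_A=-\infty$). No such argument is needed: because $\partial G\subseteq C_1$ for every bounded gap $G$, the complement of $C_1'=C_1\cup X_1$ is exactly the union of the unfilled gaps together with $E^1$, hence open, so $C_1'$ is closed and bounded; this same observation is also what underpins your (correct) claim that the bounded gaps of $C_i'$ are precisely the unfilled bounded gaps of $C_i$.
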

\begin{proof}
    Consider the sets $C_1$ with gaps ${E^1}$, $({G^1}_k)_{k\in J_1}$ and $C_2$ with gaps ${E^2}$, $({G^2}_k)_{k\in J_2}$.
    We set
        \begin{align*}
            C_1'=C_1\cup \bigcup_{j\in J_2}\bigcup_{\substack{i\in J_1 : \\\overline{G^1_{i}}\subseteq {G^2_{j}}}} {G_i^1}.
        \end{align*}
        It is clear that $C_1'$ is a compact subset of $B[0,1]$ and $C_1'\cap C_2=C_1\cap C_2$.
        Moreover, no bounded gap ${G^1}'$ of $C_1'$ is such that $\overline{{G^1}'}\subseteq {G^2}'$ for any bounded gap ${G^2}$ of $C_2$ since the gaps of $C_1$ satisfying this condition have been removed.
        Moreover, since we are only removing gaps we have that $\tau_A(C_1')\geq \tau_A(C_1)$.
        
        Consider the sets $C_1'$ with gaps ${E^1}'$, $({G^1}_k')_{k\in J_1'}$ and $C_2$ with gaps ${E^2}$, $({G^2}_k)_{k\in J_2}$.
        We set
        \begin{align*}
            C_2'=C_2\cup \bigcup_{j\in J_1'}\bigcup_{\substack{i\in J_2 : \\\overline{G^2_{i}}\subseteq {G^1_{j}}'}} {G_i^2}.
        \end{align*}
        Again it is clear that $C_1''$ is a compact subset of $B[0,1]$ and $C_1'\cap C_2'=C_1\cap C_2$.
        Moreover, no bounded gap ${G^2}'$ of $C_2'$ is such that $\overline{{G^2}'}\subseteq {G^1}'$ for any bounded gap ${G^1}$ of $C_1'$ since the gaps of $C_2'$ satisfying this condition have been removed.
        Moreover, since we are only removing gaps we have that $\tau_A(C_2')\geq \tau_A(C_2)$.
\end{proof}
This lemma can be used to obtain the following useful class of strongly refinable pairs.
\begin{prop}\label{prop:sufficient cond for strong ref}
    Let $C_1,C_2\subset B[0,1]$ be a pair of compact sets such that the following hold:
    \begin{itemize}
        \item There exists bounded gaps $G^1$ of $C^1$ and $G^2$ of $C_2$ such that $\overline{G^1}$ is not contained in a gap of $C_2$ and $\overline{G^2}$ is not contained in a gap of $C_1$; moreover $\partial G^1\not\subseteq E^2$ and $\partial G^2\not\subseteq E^1$.
        \item For any pair of bounded gaps $G^1$ of $C_1$ and $G^2$ of $C_2$ we have that if $\partial G^1\subseteq G^2$ then $G^1\subseteq G^2$ and if $\partial G^2\subseteq G^1$ then $G^2\subseteq G^1$.
    \end{itemize}
    Under these conditions $C_1$ and $C_2$ are a strong refinable pair.
\end{prop}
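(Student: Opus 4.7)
The plan is to take $\tilde C_1 := C_1'$ and $\tilde C_2 := C_2'$ produced by \Cref{lem: contained gaps are fine}; then conditions (3) and (4) of \Cref{def: strongrefinable} are immediate from its conclusion, and what remains is to verify the BG linked property (1) and the boundary condition (2). The key preliminary observation is that every bounded gap of $C_1'$ is already a bounded gap of $C_1$: the construction only unions $C_1$ with some of its own bounded gaps, which removes those components from the complement while leaving all other components (whose boundaries already lie in $C_1\subseteq C_1'$) unchanged. The analogous statement holds for $C_2'$.

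To verify (1), let $G^{1'}$ and $G^{2'}$ be bounded gaps of $C_1'$ and $C_2'$ with nonempty intersection. By the observation above, they are in particular bounded gaps of $C_1$ and $C_2$, so the second hypothesis of the proposition applies. Suppose for contradiction that $\partial G^{1'}\subseteq G^{2'}$; this hypothesis then yields $G^{1'}\subseteq G^{2'}$, hence $\overline{G^{1'}}\subseteq G^{2'}$, contradicting the conclusion of \Cref{lem: contained gaps are fine}. Therefore $\partial G^{1'}\setminus G^{2'}\neq\emptyset$, and by symmetry $\partial G^{2'}\setminus G^{1'}\neq\emptyset$, which with the intersection assumption is exactly the definition of linked.

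For (2), let $G^1,G^2$ be the bounded gaps supplied by the first hypothesis. Since $\overline{G^1}$ is not contained in any bounded gap of $C_2$ and the removal criterion in \Cref{lem: contained gaps are fine} is precisely that containment, $G^1$ persists as a bounded gap of $C_1'$; the analogous argument (using that bounded gaps of $C_1'$ are bounded gaps of $C_1$) shows $G^2$ persists as a bounded gap of $C_2'$. Because $C_2\subseteq C_2'$, the unbounded components satisfy $E^{2'}\subseteq E^2$, so $\partial G^1\setminus E^{2'}\supseteq\partial G^1\setminus E^2\neq\emptyset$, and symmetrically for $\partial G^2$ and $E^{1'}$. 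The only real obstacle is the bookkeeping needed to transfer the proposition's hypotheses from $C_1,C_2$ to the primed sets; once it is recognised that bounded gaps of the primed sets are inherited from the originals and that the unbounded component can only shrink, every remaining step is a direct combination of the two hypotheses with the content of \Cref{lem: contained gaps are fine}.
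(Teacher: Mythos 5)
Your proof is correct and follows essentially the same route as the paper: take $\tilde{C}_1=C_1'$ and $\tilde{C}_2=C_2'$ from \Cref{lem: contained gaps are fine}, get conditions (3) and (4) for free, deduce BG linkedness from the second hypothesis combined with the lemma's third conclusion, and use the first hypothesis to exhibit gaps that survive the construction and satisfy the boundary condition. Your explicit bookkeeping (bounded gaps of the primed sets are inherited from the originals, and the unbounded components can only shrink) merely spells out steps the paper's proof leaves implicit.
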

\begin{rem}
    We make the remark that the second condition is satisfied when the gaps are simply connected, this is useful for a large class of cookie cutter sets, including certain Bedford-McMullen carpets.
\end{rem}
\begin{proof}
    Observe that $C_1'$ and $C_2'$ constructed in \Cref{lem: contained gaps are fine} clearly satisfy condition (3) and (4) of \Cref{def: strongrefinable}.
    Moreover, by the second condition in the proposition, for bounded gaps $G^1$ and $G^2$ if $\partial G^1\subseteq G^2$ we have that $\overline{G^1}\subseteq G^2$ and if $\partial G^2\subseteq G^1$ we have that $\overline{G^2}\subseteq G^1$.
    Therefore $C_1'$ and $C_2'$ are BG linked as any gap from $C_1$ and $C_2$ with boundary in another bounded gap is removed.

    We conclude by observing that our first condition implies that there exists gaps satisfying condition (2) in the sets $C_1$ and $C_2$ which are not removed in the construction of $C_1'$ and $C_2'$ since they are not contained in a bounded gap. 
    Hence the sets $C_1$ and $C_2$ are strongly refinable.
    \end{proof}

We finish this section by presenting some small questions about the gap lemma in higher dimensions.
\begin{itemize}
    \item \Cref{prop:sufficient cond for strong ref} presents sufficient conditions for a pair of compact sets to be strongly refinable. How far can the conditions be weakened?
    \item Does the class of sets not satisfying \Cref{prop:sufficient cond for strong ref} contain well known examples of fractal sets?
\end{itemize}

\section{Affine thickness and the matrix potential game}
The potential game was introduced in \cite{MR3826896} and a quantitive variant was defined in \cite{broderick2017quantitative-OriginalRestrictedPotential}.
In \cite{howat2025matrixpotentialgamestructures}, the author, Mitchell and Samuel defined a variant of the potential game, called the matrix potential game, to study patterns and intersections of self-affine sets, this being defined below.
This work complemented and extended the work of Yavicoli et al in \cite{falconer2022intersections-ThicknessBasedGame,OriginalAlexiaPaper,yavicoli2022thickness-BoundaryBasedGame}, in which variants of Newhouse thickness were introduced, and it was shown that certain sets with positive thickness are winning in the potential game.
In this section the author provides complementary results for thickness with respect to an affine matrix $A$.

Note that throughout this section we exclusively consider the square metric.

\begin{defin}\label{def:winning}
    Let $A$ be a diagonal $(n\!\times\!n)$-matrix with diagonal entries $\beta_{11},\dots, \beta_{nn}\in (0,1)$.
    Given $\rho_1\geq \rho_2>0$, $\alpha>0$ and $c\in[ 0,1)$, the $(\alpha,A,c,\rho_2,\rho_1)$-potential game is played with the following rules, with Player\;II having the option to skip their turn:
    \begin{itemize}
        \item On their first turn, Player\;I plays a closed set $U_1=A(B[0,r])+b_1$ satisfying $\rho_1\geq r\geq \rho_2$.
        \item On their first turn, Player\;II responds by selecting a collection of tuples
        \begin{align*}
            \mathcal{A}(U_1) = \left\{(q_{i,1},y_{i,1}) :  q_{i,1} \in \R \; \text{with} \; q_{i,1} \geq 1, y_{i,1}\in\R^n \; \text{and} \; i\in I_1\right\},
        \end{align*}
        where $I_1$ is an most countable index set.
        This defines the deleted set 
        \begin{align*}
        \bigcup_{\substack{(q_{i,1},y_{i,1})\\\in\mathcal{A}(U_1)}} \left(A^{q_{i,1}}(B[0,r])+y_{i,1}\right),
        \end{align*}
        which will be used to define the winning conditions at the end of the game. If $c>0$, then Player\;II's collection must satisfy
            \begin{align*}
            \sum\limits_{i\in I_1}\prod_{j=1}^n\left( \beta_{jj}^{q_{i,1}}\right)^c\leq \left(\alpha\prod_{j=1}^n\beta_{jj}\right)^{c}\hspace{-0.625em}.
            \end{align*}
        If $c=0$, then Player\;II can only choose one pair $(q_{1,1},y_{1,1})$ which satisfies:
            \begin{align*}
            \prod_{j=1}^n\beta_{jj}^{q_{1,1}}\leq\alpha \prod_{j=1}^n\beta_{jj}.
            \end{align*}
        \item For each $m\in\N\setminus\{1\}$, on their $m$-th turn, Player\;I plays a closed set $U_m=A^{m}\left(B[0,r]\right)+b_m$ such that $U_m\subseteq U_{m-1}$.
        \item On their $m$-th turn, Player\;II responds by selecting a collection of tuples 
        \begin{align*}
            \mathcal{A}(U_m,\dots,U_1) = \left\{(q_{i,m},y_{i,m}): q_{i,m} \in \R \; \text{with} \; q_{i,m} \geq 1,  y_{i,m}\in\R^n \; \text{and} \; i\in I_m\right\},
        \end{align*}
        where $I_m$ is an at most countable index set. 
        This defines the deleted set
        \begin{align*}
            \bigcup_{\substack{(q_{i,m},y_{i,m})\\\in\mathcal{A}(U_m,\dots,U_1)}} \left(A^{q_{i,m}}(B[0,r])+y_{i,m}\right).
        \end{align*}
        If $c>0$ ,then Player\;II's collection must satisfy
        \begin{align*}
        \sum\limits_{i\in I_m}\prod_{j=1}^n\left( \beta_{jj}^{q_{i,m}}\right)^c\leq \left(\alpha\prod_{j=1}^n\beta_{jj}^{m}\right)^{c}\hspace{-0.625em}. 
        \end{align*}
        If $c=0$, then Player\;II can only choose one pair $(q_{1,m},y_{1,m})$ which satisfies:
            \begin{align*}
            \prod_{j=1}^n\beta_{jj}^{q_{1,m}}\leq\alpha \prod_{j=1}^n\beta_{jj}^{m}.
            \end{align*}
    \end{itemize}
\end{defin}
By construction, there exists a single point $x_{\infty}$, with $\{x_{\infty}\} = \cap_{m\in\N}U_m$, called the outcome of the game and a deleted region,
    \begin{align}\label{eq:winning-condition-deletion}
        \operatorname{Del}(\mathcal{A}(U_1),\mathcal{A}(U_1,U_2),\dots) = \bigcup_{m\in\N} \bigcup_{\substack{(q_{i,m},y_{i,m})\\\in\mathcal{A}(U_m,\dots,U_1)}} \left(A^{q_{i,m}}(B[0,r])+y_{i,m}\right).
    \end{align}
We say that a set $S\subseteq \R^n$ is an \textit{$(\alpha,A,c,\rho_2,\rho_1)$-winning set}, if Player\;II has a strategy ensuring that if $x_{\infty}\not\in \operatorname{Del}(\mathcal{A}(U_1),\mathcal{A}(U_1,U_2),\dots)$, then $x_{\infty}\in S$.

The following lemmas \autocite[Lemma 3.3 and Lemma 3.4]{howat2025matrixpotentialgamestructures} are properties of the matrix potential game:
\begin{lem}[Monotonicity]\label{lem:monotonicity}
    Let $A$ be a diagonal $(n\!\times\!n)$-matrix with diagonal entries $\beta_{11},\dots, \beta_{nn}\in(0,1)$, $\rho_1 \geq \rho_2 > 0$, $\alpha \in (0,1]$ and $c\in[ 0,1)$. If $S$ is an $(\alpha,A,c,\rho_2,\rho_1)$-winning set, then for $\alpha',c',\rho_2',\rho_1'$ satisfying $\alpha\leq \alpha'$, $c'\geq c$, $\rho_2\leq\rho_2'\leq\rho_1'\leq \rho_1$, the set $S$ is $(\alpha',A,c',\rho_2',\rho_1')$-winning.
\end{lem}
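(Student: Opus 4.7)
The plan is to transfer Player\;II's winning strategy from the easier parameter regime to the harder one by having Player\;II simply mirror their old play. First I would verify that any Player\;I play in the $(\alpha',A,c',\rho_2',\rho_1')$-game is already legal in the $(\alpha,A,c,\rho_2,\rho_1)$-game: the admissible first-move radius $r\in[\rho_2',\rho_1']$ sits inside $[\rho_2,\rho_1]$ by the hypothesis $\rho_2 \leq \rho_2' \leq \rho_1' \leq \rho_1$, and the nesting condition $U_m \subseteq U_{m-1}$ makes no reference to $\alpha$ or $c$. Given a winning strategy $\sigma$ for Player\;II in the old game, I define Player\;II's strategy $\sigma'$ in the new game to play, at each round $m$, the same collection $\mathcal{A}(U_m,\dots,U_1)$ that $\sigma$ prescribes.

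The only step with any real content is verifying that this collection satisfies the new potential constraint. Writing $b_i = \prod_{j=1}^n \beta_{jj}^{q_{i,m}}$, $\mu = \alpha\prod_{j=1}^n\beta_{jj}^m$ and $\mu' = \alpha'\prod_{j=1}^n\beta_{jj}^m$, so that $\mu \leq \mu'$, when $c > 0$ the old constraint reads $\sum_i b_i^c \leq \mu^c$. Since every summand is nonnegative, this individually forces $b_i \leq \mu$, so each ratio $b_i/\mu$ lies in $(0,1]$. As $c' \geq c$ and the base is at most $1$, I get $(b_i/\mu)^{c'} \leq (b_i/\mu)^c$ termwise, and summing
\begin{align*}
\sum_i b_i^{c'} \;=\; \mu^{c'}\sum_i (b_i/\mu)^{c'} \;\leq\; \mu^{c'-c}\sum_i b_i^c \;\leq\; \mu^{c'} \;\leq\; \mu'^{c'},
\end{align*}
which is exactly the new constraint. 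The corner case $c=0$ reduces to a single pair $(q_{1,m},y_{1,m})$ with $b_1 \leq \mu$; reusing it as a one-element collection handles both $c'=0$ (where $b_1 \leq \mu \leq \mu'$ suffices) and $c'>0$ (where $b_1^{c'} \leq \mu^{c'} \leq \mu'^{c'}$ since $\mu < 1$).

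Because Player\;II's collections under $\sigma'$ coincide exactly with those under $\sigma$, both the outcome $x_{\infty} = \bigcap_{m\in\N}U_m$ and the deleted region in \eqref{eq:winning-condition-deletion} are identical across the two games. Hence whenever $\sigma$ forces $x_{\infty} \in S$ upon avoiding the deleted region, so does $\sigma'$, proving that $S$ is $(\alpha',A,c',\rho_2',\rho_1')$-winning. I do not expect any serious obstacle: the only subtlety is the implication $\sum_i b_i^c \leq \mu^c \Rightarrow b_i \leq \mu$ for each $i$, which rests on nonnegativity of the individual summands and is precisely what allows the pointwise power-comparison $(b_i/\mu)^{c'} \leq (b_i/\mu)^c$ used in the displayed chain above.
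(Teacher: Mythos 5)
Your argument is correct, and in fact the paper offers no proof of this lemma at all: it is imported verbatim from \autocite[Lemmas 3.3 and 3.4]{howat2025matrixpotentialgamestructures}, so there is no in-paper argument to diverge from. Your strategy-mirroring proof --- Player\;I's moves in the primed game are legal in the unprimed game since $[\rho_2',\rho_1']\subseteq[\rho_2,\rho_1]$, and Player\;II's old collections remain legal because $\sum_i b_i^c\leq\mu^c$ forces each $b_i\leq\mu$, whence $\sum_i b_i^{c'}\leq\mu^{c'}\leq\mu'^{c'}$ --- is exactly the standard monotonicity argument for such potential games and is complete as written.
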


\begin{lem}[Countable intersections]\label{lem:countableintersections}
    Let $A$ be a diagonal $(n\!\times\!n)$-matrix with diagonal entries $\beta_{11},\dots, \beta_{nn}\in(0,1)$.
    Let $\{S_j\}_{j\in J}$ be an at most countable collection of sets such that for each $j\in J$, the set $S_j$ is a $(\alpha_j,A,c,\rho_2,\rho_1)$-winning set with $c>0$. If $\alpha = (\sum_{j\in J}(\alpha_j)^c)^{1/c}$ is finite, then $S= \cap_{j\in J} S_j$ is an $(\alpha,A,c,\rho_2,\rho_1)$-winning set.
\end{lem}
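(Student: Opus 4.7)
The plan is to realise Player\;II's strategy in the combined game as the simultaneous simulation of winning strategies $\sigma_j$ for each factor $S_j$. Fix the sequence $U_1 \supseteq U_2 \supseteq \cdots$ of Player\;I's moves; since the parameters $A$, $\rho_1$, $\rho_2$ are identical across all factor games, this same sequence is a legal Player\;I play in every $(\alpha_j,A,c,\rho_2,\rho_1)$-game. On her $m$-th turn Player\;II outputs the at-most-countable union $\bigcup_{j \in J} \mathcal{A}_j(U_1,\dots,U_m)$, where $\mathcal{A}_j$ is the response prescribed by $\sigma_j$ (which is well-defined as a countable union of at most countable index sets).

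First I would check the weight budget. Summing the $\sigma_j$-constraints and invoking the hypothesis $\alpha^c = \sum_{j \in J} \alpha_j^c$ gives $\sum_{j\in J}\sum_{i \in I_{j,m}} \prod_{k=1}^n (\beta_{kk}^{q_{i,m}^{(j)}})^c \leq \sum_{j\in J}\bigl(\alpha_j \prod_{k=1}^n \beta_{kk}^m\bigr)^c = \bigl(\alpha\prod_{k=1}^n \beta_{kk}^m\bigr)^c$ at every turn $m$, which is exactly what Player\;II is allowed in the $(\alpha,A,c,\rho_2,\rho_1)$-game. The essential feature is that the weight functional is an $\ell^c$-sum with the same exponent $c$ across every subgame, so individual budgets add cleanly to a combined budget; this would fail in general if the exponents $c$ were not aligned.

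Next I would verify the winning condition. By construction the combined deleted set is exactly $\bigcup_{j\in J}\operatorname{Del}(\mathcal{A}_j(U_1),\mathcal{A}_j(U_1,U_2),\dots)$, so the outcome $x_\infty = \bigcap_m U_m$ avoids the combined deletion if and only if it avoids every individual deletion; since each $\sigma_j$ is winning for $S_j$, this forces $x_\infty \in S_j$ for every $j \in J$, and hence $x_\infty \in \bigcap_{j\in J} S_j = S$. The main obstacle will be cosmetic rather than substantive: one must check that the union strategy is non-anticipating, i.e.\ Player\;II's $m$-th move depends only on $U_1,\dots,U_m$, which is automatic since each $\sigma_j$ already has this property, and the rest is routine $\ell^c$ bookkeeping.
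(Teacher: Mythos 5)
Your proof is correct and is the standard simulation argument: since the weight constraint is an $\ell^c$-sum with a common exponent, the individual budgets $\alpha_j^c$ add to the combined budget $\alpha^c$, and the union of the deleted regions forces the outcome into every $S_j$. The paper itself does not reprove this lemma but imports it from \autocite[Lemma 3.4]{howat2025matrixpotentialgamestructures}, where the argument is essentially the one you give, so there is nothing to add.
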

We now prove that thick sets are winning in the matrix potential game, allowing us to apply the results of \cite{howat2025matrixpotentialgamestructures} to thick sets.
\begin{prop}\label{prop: Winning conditions}
    Let $A$ be a diagonal matrix with entries $\beta_{11},\dots,\beta_{nn}\in (0,1)$, and let $C\subseteq B[0,1]$ be a compact set with gaps $(G_k)_{k\in J}$ and $E$.
    If $\tau_A(C)\not\in \{ -\infty,\infty\}$, then $C\cup E$ is $(\alpha_{\tau_A(C)},A,0,1,1)$-winning in the matrix potential game, where $\alpha_{\tau_A(C)}=\prod_{j=1}^n \beta_{jj}^{\tau_A(C)-1}$.
\end{prop}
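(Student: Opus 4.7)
The plan is to equip Player\,II with an explicit strategy that handles each bounded gap $G_k$ of $C$ on exactly one turn determined by $S_A(G_k)$ and $\tau_A(C)$, splitting the analysis according to whether $\alpha_{\tau_A(C)} \geq 1$ or not.

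Set $\gamma = \prod_{j=1}^n \beta_{jj}$, so $\alpha_{\tau_A(C)} = \gamma^{\tau_A(C)-1}$. When $\tau_A(C)\leq 1$ the parameter $\alpha_{\tau_A(C)}\geq 1$, and at each turn $m$ Player\,II may legally choose $(q_{1,m},y_{1,m}) = (m,b_m)$, where $b_m$ is the centre of $U_m$. The deletion rectangle then equals $U_m$ itself; the volume constraint $\gamma^m \leq \alpha_{\tau_A(C)}\gamma^m$ holds automatically, and the outcome $x_\infty$ lies in $U_m$ for every $m$, so $x_\infty \in \operatorname{Del}$ vacuously. Thus $C \cup E$ is $(\alpha_{\tau_A(C)},A,0,1,1)$-winning in this regime.

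When $\tau_A(C) > 1$, the thickness identity $S_A(G_k)^{-1} \geq \tau_A(C) + GD_A(k,C)^{-1}$ forces $S_A(G_k) < 1$ for every $k\in J$ (since $\tau_A(C) > 1$ and $GD_A(k,C)^{-1} > 0$). For each such $k$ set the designated turn $m_k := \lfloor S_A(G_k)^{-1} - \tau_A(C) + 1\rfloor$, which is $\geq 1$. Player\,II's strategy: at turn $m$, if there exists a bounded gap $G_k$ with $m_k = m$ and $G_k \cap U_m \neq \emptyset$, choose $q_{1,m}= S_A(G_k)^{-1}$ and $y_{1,m}\in\R^n$ with $G_k \subseteq A^{q_{1,m}}(B[0,1])+y_{1,m}$ (whose existence follows from the definition of $S_A(G_k)$); otherwise skip the turn.

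Three checks finish the proof. \textbf{Validity.} Since $S_A(G_k) < 1$, we have $q_{1,m}= 1/S_A(G_k) > 1$, and the volume constraint $\gamma^{1/S_A(G_k)} \leq \alpha_{\tau_A(C)}\gamma^m$ rewrites as $m \leq S_A(G_k)^{-1} - \tau_A(C) + 1$, which is exactly the inequality defining $m_k$. \textbf{Uniqueness.} If $k < k'$ both satisfied the criteria at the same turn $m$, then $U_m$ (of $A$-size $1/m$) intersects both $G_k$ and $G_{k'}$, so by definition $GD_A(k',C) \leq 1/m$, yielding $S_A(G_{k'})^{-1} \geq \tau_A(C) + m$; but $m_{k'} = m$ forces $S_A(G_{k'})^{-1} < \tau_A(C) + m$, a contradiction. \textbf{Coverage.} If $x_\infty \in G_k$ then $G_k$ meets every $U_m$, so at turn $m_k$ the gap $G_k$ is the unique candidate and is deleted, placing $x_\infty \in \operatorname{Del}$. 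Consequently $x_\infty \notin \operatorname{Del}$ forces $x_\infty \in C \cup E$, proving the winningness.

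I expect the Uniqueness step to be the crux: it is the only place where the structural thickness inequality is played off against the $A$-size of Player\,I's move, and it also motivates the initial case split that side-steps the degenerate regime $S_A(G_k) \geq 1$ which would otherwise prevent the single-deletion choice $q_{1,m} = S_A(G_k)^{-1}$ from being legal.
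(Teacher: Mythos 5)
Your proposal is correct, and at its core it is the same strategy as the paper's: Player II removes each relevant bounded gap $G_k$ with a single move $(S_A(G_k)^{-1},z)$ where $G_k\subseteq A^{S_A(G_k)^{-1}}(B[0,1])+z$, and everything rests on the defining inequality $S_A(G_k)^{-1}-GD_A(k,C)^{-1}\geq\tau_A(C)$. The bookkeeping, however, is reversed. The paper triggers the deletion at the first turn $N$ with $N>GD_A(i,C)^{-1}$, so that $U_N$ can no longer meet earlier gaps or $E$, and then \emph{derives legality} of the move from the thickness inequality; you instead schedule each gap at the last legal turn $m_k=\lfloor S_A(G_k)^{-1}-\tau_A(C)+1\rfloor$, get legality for free, and use the thickness inequality to \emph{derive uniqueness}, i.e.\ that no two scheduled gaps can compete for the same turn --- which is exactly the disjointness fact the paper uses implicitly when it asserts $U_m\cap G_j=\emptyset$ for $j<i$ and $U_m\cap E=\emptyset$. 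Your opening case split is a genuine (if deflating) addition: when $\tau_A(C)\leq 1$ we have $\alpha_{\tau_A(C)}\geq 1$, Player II may legally delete $U_m$ itself every turn, and the winning condition holds vacuously; this also quietly takes care of the rule $q_{1,m}\geq 1$, which your choice $q_{1,m}=S_A(G_k)^{-1}$ satisfies precisely because $\tau_A(C)>1$ in your second case, a constraint the paper's uniform argument does not address explicitly. So your route buys a cleaner treatment of the degenerate regime and an explicit one-gap-per-turn verification, at the cost of a slightly more elaborate scheduling rule.
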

\begin{rem}
    We exclude the case where $\tau_A(C)=\infty$ since this implies that $E$ is the only connected component, and hence $E\cup C=\R^n$.
    Therefore, there is no need to consider these sets.
\end{rem}
\begin{proof}
    We define a general rule and prove that this is a winning strategy for Player II.
    Let $U_m=A^{m}(B[0,r])+b_m$ be the $m^{\text{th}}$ play of the game by Player I, noting that $r\in [\rho_2,\rho_1]$.
    If there exists $i\in J$ such that $U_m$ intersects $G_i$ and $k>GD_A(i,C)^{-1}$ we have that $U_m\cap G_k=\emptyset$ for all $k<i$ and $U_m\cap E=\emptyset.$
    Therefore, if it is a legal play, Player II deletes the collection $\mathcal{A}(U_m)=\{(S_A(G_i)^{-1},z)\}$ where $z$ is such that $A^{S_A(G_i)^{-1}}(B[0,1])+z\supset G_i$; the existence of which is guaranteed by the definition of $S_A(G_i)$.
    Otherwise Player II skips their turn.

    To verify Player II's winning strategy assume that $x_{\infty}\not \in \operatorname{DEL}(\mathcal{A}(U_1),\dots)$ and for a contradiction that $x_{\infty}\not \in C\cup E$ and hence there exists a gap $G_i$ with $x_{\infty}\in G_i$.
    Observe, that since $x_{\infty}\in U_m$ for all $m\in\N$ and $\tau_A(C)\neq -\infty$, there exists $N\in\N$ such that $k> GD_A(i,C)^{-1}$ for all $k\in\N_0$ with $k\geq N$ and $k\leq GD_A(i,C)^{-1}$ for all $k\in\N_0$ with $k<N$.
    Hence, on the $N^{\text{th}}$ turn of the game $U_N$ intersects $G_i$ and $ N>GD_A(i,C)^{-1}\geq N-1$, however the set $G_i$ is not deleted, therefore the play $\mathcal{A}(U_N)=\{(S_A(G_i)^{-1},z)\}$ where $z$ is such that $A^{S_A(G_i)^{-1}}(B[0,1])+z\supset G_i$ is not legal, by Player II's strategy.

    However, we can observe that
    \begin{align*}
        \prod_{j=1}^n \beta_{jj}^{S_A(G_i)^{-1}}&= \prod_{j=1}^n \left(\beta_{jj}^{S_A(G_i)^{-1}-GD_A(i,C)^{-1}+GD_A(i,C)^{-1}}\right)\\
        &\leq \prod_{j=1}^n \left(\beta_{jj}^{\tau_A(C)+GD_A(i,C)^{-1}}\right)\leq\prod_{j=1}^n\left(\beta_{jj}^{\tau_A(C)+N-1}\right) =\alpha \prod_{j=1}^n \beta_{jj}^{N}.
    \end{align*}
    Hence, $\mathcal{A}(U_m)$ is a legal play and Player II's strategy dictates its deletion.
    This is a contradiction. Therefore, $x_{\infty}\in C\cup E$ and hence Player II's strategy is winning.
\end{proof}
\section{Intersections and pattern properties in sets with large thickness}
In \cite{howat2025matrixpotentialgamestructures} the author, Mitchell and Samuel provide dimensional estimates for winning sets in the matrix potential game for the square metric $d_{\infty}(x,y)= \max\{|x_1-y_1|,\dots, |x_n-y_n|\}$.
Using this estimate, they provide interesting structural results about winning sets which thick sets are a subclass.
Therefore, we can restate these results for thick sets.
Note that in this section we must restrict to the square metric and all of the following results are stated in this metric.

We say that a set $S$ \textit{contains a homothetic copy of} $C=\{b_1,...,b_M\}$, with $M \in \N$, often called a \textit{pattern}, if there exists $\lambda >0$ such that $\cap_{i=1}^M (S-\lambda b_i)\neq\emptyset$.
Hausdorff dimension alone cannot prove the existence of patterns; indeed, there exist examples of full Hausdorff dimension sets that avoid patterns~\cite{MissingAllrectanglesKeleti,MissingcountablymeanytrianglessKeleti}.
The existence of patterns in families of sets has been well studied, see for example~\cite{broderick2017quantitative-OriginalRestrictedPotential,pattsparcesets,falconer2022intersections-ThicknessBasedGame,howat2025matrixpotentialgamestructures,Arithmetic_progressions_in_sets_of_fractional_dimension,Smallsetscontaining,OriginalAlexiaPaper,yavicoli2022thickness-BoundaryBasedGame}.
We extend this study to discuss the patterns in sets with large affine thickness.
\begin{thrm}\label{thm:patterns}
    Given $n \in \N$, consider the square metric on $\R^n$ and let $A$ be a diagonal ($n\!\times\!n$)-matrix with diagonal entries $\beta_{11},\dots,\beta_{nn} \in (0, 1/5)$, setting $\beta_{\max} = \max_j \beta_{jj}$.
    Let $C\subseteq B[0,1]$ be a compact subset with gaps $(G_k)_{k\in J}$ and $E$.
    Moreover, $\tau_A(C)\not\in\{ -\infty,\infty\}$.
    
    Let $\alpha_{\tau_A(C)}=\prod_{j=1}^n \beta_{jj}^{\tau_A(C)-1}$.
    If there exist $c,\delta\in(0,1)$ and $M \in \N$ such that
        \begin{align}\label{pattern_inequal_1}
            M\alpha_{\tau_A(C)}^c\leq \delta^{2}\left(1-\left(\prod_{j=1}^n\beta_{jj}\right)^{1-c}\right) \quad \text{and} \quad 
            3^{-n} \prod\limits_{j=1}^{n} (1-5\beta_{jj}^{\lfloor\delta(M^{1/c}\alpha_{\tau_A(C)})^{-1}\rfloor}) > 8^n (1+2^{2n+1})\delta.
        \end{align} 
    Then given a finite non-empty set $F$ with at most $M$ elements, $y\in\R^n$ and $\lambda\in (0,(1-\beta_{\max})/\text{diam}(F))$, there exists a non-empty set $X\subseteq \R^n$ such that $\lambda F+x\subseteq (C\cup E)\cap B[y,1]$ for all $x\in X$.
    Moreover, if
    \begin{align*}
        M\alpha_{\tau_A(C)}^c\leq \min\{\delta^2,K_{M}^{-1} n \log \beta_{\max}^{-1}\}\left(1-\left(\prod_{j=1}^n\beta_{jj}\right)^{1-c}\right),
    \end{align*}
    where 
        \begin{align*}
        K^{}_{M} =2\delta^{-1}\left\lvert \log\left( 3^{-n}\prod\limits_{j=1}^{n} (1-5\beta_{jj}^{\lfloor\delta(M^{\frac{1}{c}}\alpha_{\tau_A(C)})^{-1}\rfloor}) - 8^n (1+2^{2n+1})\delta\right)\right\rvert,
        \end{align*}
    then $\dim_{H}(X) \geq n-K^{}_{M} \alpha_{\tau_A(C)} (\lvert\log (\beta_{\max}) \rvert)^{-1}>0$.
\end{thrm}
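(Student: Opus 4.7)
The plan is to reduce the pattern and dimension assertions to existing results from \cite{howat2025matrixpotentialgamestructures} on intersections of winning sets in the matrix potential game, and use \Cref{prop: Winning conditions} as the bridge.

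First, by \Cref{prop: Winning conditions}, the set $C \cup E$ is $(\alpha_{\tau_A(C)},A,0,1,1)$-winning. Fix $c \in (0,1)$ from the hypothesis. By \Cref{lem:monotonicity}, $C \cup E$ is therefore $(\alpha_{\tau_A(C)},A,c,1,1)$-winning. The matrix potential game is clearly translation invariant (translating both the outcome and every deleted ball by the same vector preserves all the admissibility constraints), so for each $b_i \in F$ the translate $(C\cup E) - \lambda b_i$ is also $(\alpha_{\tau_A(C)},A,c,1,1)$-winning.

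Next, since $F$ has at most $M$ elements, \Cref{lem:countableintersections} yields that the finite intersection
\begin{align*}
    S_F \;=\; \bigcap_{i=1}^{\lvert F\rvert} \big((C\cup E) - \lambda b_i\big)
\end{align*}
is $(\alpha,A,c,1,1)$-winning, where $\alpha^c = \lvert F\rvert\,\alpha_{\tau_A(C)}^c \leq M\alpha_{\tau_A(C)}^c$; equivalently $\alpha \leq M^{1/c}\alpha_{\tau_A(C)}$. Note that $x \in S_F$ is exactly the condition $\lambda F + x \subseteq C\cup E$, so the pattern claim is equivalent to the existence of such an $x$ inside (a slight shrinking of) $B[y,1]$. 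The first inequality in \eqref{pattern_inequal_1} is precisely the scale comparison $\alpha^c \leq \delta^2\bigl(1-(\prod_j\beta_{jj})^{1-c}\bigr)$ appearing as the standing hypothesis in the main dimension theorem for winning sets in \cite{howat2025matrixpotentialgamestructures}, and the second inequality is the geometric "separation" inequality required there. Applying that theorem to the $(\alpha,A,c,1,1)$-winning set $S_F$ then gives both the non-emptiness of $S_F \cap B[y,1]$ (and hence of $X$, upon taking $\lambda < (1-\beta_{\max})/\operatorname{diam}(F)$ so that $\lambda F + x$ actually fits inside $B[y,1]$) and, under the stronger second hypothesis, the lower bound
\begin{align*}
    \dim_H(X) \;\geq\; n - K_M\,\alpha\,(\lvert\log\beta_{\max}\rvert)^{-1} \;\geq\; n - K_M\,\alpha_{\tau_A(C)}\,(\lvert\log\beta_{\max}\rvert)^{-1}\,.
\end{align*}
Here the monotonicity of the dimension bound in $\alpha$ lets us replace $M^{1/c}\alpha_{\tau_A(C)}$ by $\alpha_{\tau_A(C)}$ in the leading factor at the cost of absorbing $M^{1/c}$ into the constant $K_M$ (which is exactly how $K_M$ is defined via the floor $\lfloor \delta(M^{1/c}\alpha_{\tau_A(C)})^{-1}\rfloor$).

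The main content beyond bookkeeping is the invocation of the dimension estimate from \cite{howat2025matrixpotentialgamestructures}: verifying that the two inequalities in \eqref{pattern_inequal_1} match the hypotheses there (after substituting $\alpha = M^{1/c}\alpha_{\tau_A(C)}$) and that the localisation to $B[y,1]$, together with the upper bound on $\lambda$, is compatible with starting Player I's game on a ball of radius $1$ inside $B[y,1]$. I expect this matching of parameters to be the only delicate step; everything else is a direct chaining of \Cref{prop: Winning conditions}, \Cref{lem:monotonicity}, and \Cref{lem:countableintersections}.
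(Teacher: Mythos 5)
Your overall strategy coincides with the paper's in its first step: the paper's proof consists entirely of \Cref{prop: Winning conditions} followed by a citation of the patterns theorem for winning sets in the matrix-potential-game paper (Theorem 6.1 there), which already contains the translation, intersection and localisation bookkeeping with exactly the constants appearing in \eqref{pattern_inequal_1} and in $K_M$. What you propose instead is to re-derive that theorem from the lower-level ingredients (\Cref{lem:monotonicity}, \Cref{lem:countableintersections}, and an unspecified ``main dimension theorem'' for a single winning set). That is a legitimate route in principle, but as written it has a genuine gap in the dimension estimate. After intersecting the $\lvert F\rvert\leq M$ translates, your set $S_F$ is winning with parameter $\alpha=M^{1/c}\alpha_{\tau_A(C)}$, so the generic dimension estimate applied to $S_F$ can only give $\dim_H(X)\geq n-K_M\,M^{1/c}\alpha_{\tau_A(C)}\,(\lvert\log\beta_{\max}\rvert)^{-1}$. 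Your closing chain
\begin{align*}
\dim_H(X)\;\geq\; n-K_M\,\alpha\,(\lvert\log\beta_{\max}\rvert)^{-1}\;\geq\; n-K_M\,\alpha_{\tau_A(C)}\,(\lvert\log\beta_{\max}\rvert)^{-1}
\end{align*}
is backwards: since $M^{1/c}\geq 1$ we have $\alpha\geq\alpha_{\tau_A(C)}$, so the middle quantity is \emph{smaller} than the right-hand side, and the stated bound does not follow. Nor can you ``absorb $M^{1/c}$ into $K_M$'': $K_M$ is fixed explicitly in the statement, and the factor $M^{1/c}$ enters it only through the floor $\lfloor\delta(M^{1/c}\alpha_{\tau_A(C)})^{-1}\rfloor$, not as a multiplicative factor that could compensate for the leading $\alpha$. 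Your route also never uses the hypothesis $M\alpha_{\tau_A(C)}^c\leq K_M^{-1}n\log\beta_{\max}^{-1}\bigl(1-(\prod_j\beta_{jj})^{1-c}\bigr)$, which is what guarantees the strict positivity of the bound, so that part of the conclusion is likewise unaccounted for.

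Two smaller points: translation invariance of the winning property is asserted but not argued (it is true because the deletion constraints are translation covariant, but it should be said), and the localisation step --- that the outcome can be forced into $B[y,1]$ with $\lambda F+x\subseteq B[y,1]$, given that Player~I rather than Player~II chooses $U_1$, and that $\lambda<(1-\beta_{\max})/\operatorname{diam}(F)$ is the right threshold --- is precisely the content you defer to ``matching of parameters''. In the paper all of this is packaged inside the cited Theorem 6.1, so either cite that theorem directly (as the paper does) or carry out the localisation and fix the dimension bookkeeping; as it stands the ``moreover'' part of the statement is not proved by your argument.
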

\begin{proof}
    This result follows from \Cref{prop: Winning conditions} and \autocite[Theorem 6.1]{howat2025matrixpotentialgamestructures}.
\end{proof}
    In addition to our result on patterns we can also provide a useful tool for studying intersections of sets due to the countable intersections property.
    This is useful as Hausdorff dimension is not stable under intersections.
\begin{thrm}\label{cor:intersections}
    Given $n \in \N$, consider the square metric on $\R^n$ and let $A$ be a diagonal ($n\!\times\!n$)-matrix with diagonal entries $\beta_{11},\dots,\beta_{nn} \in (0, 1/5)$.
    Moreover, let $I$ be an at most countable set and for each $i \in I$, let $C_i\subseteq B[0,1]$ be a compact set with gaps $(G_k^{(i)})_{k\in J_i}$ and $E^{(i)}$.
    Moreover, for each $i\in I$, $\tau_A(C_{i})\not\in \{-\infty,\infty\}$.
    
    Set $C = \cap_{i \in I} (C_i\cup E^{(i)})$ and $\alpha_{\tau_A(C_i)}=\prod_{j=1}^n \beta_{jj}^{\tau_A(C_i)-1}$.
    If there exists $\alpha,c,\delta \in (0,1)$ such that $\alpha^c=\sum_{i\in I} \alpha_{\tau_A(C_i)}^c$ and 
        \begin{align}\label{pattern_inequal_2}
            \alpha^c\leq \delta^2\left(1-\left(\prod_{j=1}^n\beta_{jj}\right)^{1-c}\right)<1 \quad \text{and} \quad 0<\prod\limits_{j=1}^n\left( \frac{1}{3}(1-5\beta_{jj}^{\lfloor\delta\alpha^{-1}\rfloor})\right)-8^n (1+2^{2n+1})\delta,
        \end{align}
        then, for every $y\in\R^n$, we have that $C\cap \big(A(B[0,1])+y\big)\neq \emptyset$.
        Moreover,
    \begin{align*}
        \text{dim}_H\left(C\cap \big(A(B[0,1])+y\big)\right)\geq\max\left\{ n-K_1\frac{\alpha}{\left|\log(\beta_{\max})\right|},0\right\}.
    \end{align*}
\end{thrm}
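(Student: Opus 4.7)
The plan is to assemble this statement as a corollary of the matrix potential game machinery already recorded in the excerpt, using the winning property of thick sets (\Cref{prop: Winning conditions}), monotonicity (\Cref{lem:monotonicity}), and countable intersections (\Cref{lem:countableintersections}), and then invoking the relevant intersection/dimension estimate from \autocite{howat2025matrixpotentialgamestructures}. The logic exactly parallels the proof of \Cref{thm:patterns}, only that here we combine many winning sets instead of producing homothetic copies inside one.

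First, for each $i \in I$, since $\tau_A(C_i)\not\in\{-\infty,\infty\}$, \Cref{prop: Winning conditions} tells us that $C_i\cup E^{(i)}$ is $(\alpha_{\tau_A(C_i)},A,0,1,1)$-winning. Applying \Cref{lem:monotonicity} with the parameter $c$ from the hypothesis (and keeping $\alpha_{\tau_A(C_i)}$, $\rho_1,\rho_2$ fixed), I upgrade this to an $(\alpha_{\tau_A(C_i)},A,c,1,1)$-winning property for every $i \in I$; the assumption $\alpha^c=\sum_{i\in I}\alpha_{\tau_A(C_i)}^c<1$ (implied by the first inequality of \eqref{pattern_inequal_2}) ensures each $\alpha_{\tau_A(C_i)}<1$, so the strategy of \Cref{lem:monotonicity} is applicable.

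Next, \Cref{lem:countableintersections} yields that
\begin{equation*}
C \;=\; \bigcap_{i\in I}\bigl(C_i\cup E^{(i)}\bigr)
\end{equation*}
is itself $(\alpha,A,c,1,1)$-winning, where $\alpha$ is defined via $\alpha^c=\sum_{i\in I}\alpha_{\tau_A(C_i)}^c$. This is the single winning set to which I feed the ambient intersection/dimension theorem from \autocite{howat2025matrixpotentialgamestructures}. The two inequalities in \eqref{pattern_inequal_2} are exactly the structural conditions (on $\alpha$, $c$, $\delta$, and the diagonal entries $\beta_{jj}$) needed to trigger that theorem for a winning set at scale $\rho_1=1$: the first inequality gives the size constraint $\alpha^c\le\delta^2(1-(\prod_j\beta_{jj})^{1-c})$ controlling Player II's budget, while the second (the positivity of $\prod_j \tfrac{1}{3}(1-5\beta_{jj}^{\lfloor\delta\alpha^{-1}\rfloor})-8^n(1+2^{2n+1})\delta$) is the geometric condition used in the tree-construction argument of \autocite{howat2025matrixpotentialgamestructures} to force non-emptiness of the intersection with $A(B[0,1])+y$ and to supply the Hausdorff-dimension lower bound.

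There is no real obstacle: the entire proof is a four-line citation chain. The only moving part that deserves care is checking that the monotonicity step is legitimate when we go from $c=0$ to the positive $c$ in the hypothesis; this is handled by \Cref{lem:monotonicity} provided each $\alpha_{\tau_A(C_i)}\le 1$, and that in turn is guaranteed by $\alpha^c<1$. Once that sanity check is noted, the conclusion, both non-emptiness of $C\cap(A(B[0,1])+y)$ and the dimension bound $\dim_H\bigl(C\cap(A(B[0,1])+y)\bigr)\ge\max\{n-K_1\alpha/|\log\beta_{\max}|,0\}$, is read off directly from the corresponding winning-set statement in \autocite{howat2025matrixpotentialgamestructures}, with $K_1$ matching the explicit constant defined there.
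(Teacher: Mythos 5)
Your proposal is correct and follows essentially the same route as the paper: the paper's proof is exactly the citation chain \Cref{prop: Winning conditions} plus the intersection/dimension result of \autocite[Corollary 6.7]{howat2025matrixpotentialgamestructures}, and your intermediate use of \Cref{lem:monotonicity} and \Cref{lem:countableintersections} merely spells out the game-theoretic combination that the cited corollary packages internally. The sanity check that each $\alpha_{\tau_A(C_i)}<1$ (from $\alpha^c<1$) is a reasonable point to note and does not change the argument.
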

\begin{proof}
    This result follows from \Cref{prop: Winning conditions} and \autocite[Corollary 6.7]{howat2025matrixpotentialgamestructures}.    
\end{proof}
\section{An example: Self-Affine Sierpinski carpets}
We discuss the results of Section 4 using the example of self-affine Sierpinski carpets.
Note that throughout this section we use the square metric.
For a vector $\underline{r}=(r_1,\dots,r_n)\in \N^n$ with each $r_i\geq 3$ and odd, we construct the $\underline{r}$ Self-affine Sierpinski carpet by dividing $B[0,1]$ into $r_1\cdot\cdots\cdot r_n$ boxes with side length $2r_i^{-1}$ on the $i^{\text{th}}$ axis.
The middle box is then removed and this process is repeated in each surviving box ad infimum.
This can additionally be realised as the attractor of the iterated function system $\{f_{\underline{j}}:\underline{j}\in J\}$ where:
\begin{align*}
    J=\left\{\underline{j}: j_i\in [1,r_i]\cap \N \text{ and } j_i\neq \frac{1}{2}(r_i+1)\right\} \text{ and } f_{\underline{i}}(x)=\left( \frac{x_1+i_1}{r_i}-1,\dots, \frac{x_n+i_n}{r_n}-1 \right).
\end{align*}
Let $\underline{r}$ be fixed let $\beta_{jj}=1/r_j$ be the diagonal entries of the diagonal matrix $A_{\underline{r}}$ and let $t\in (0,1)$ be given.
Set $A=A_{\underline{r}}^t$
We can observe that the $\underline{r}$ Self-affine Sierpinski carpet $C_{\underline{r}}$ has gaps of the form $A^k(\open{B}[0,1])$ for each $k\in \N$ and these are the missing central rectangle.
Consider a gap $G_j$ in the $k^{\text{th}}$ stage of the construction, we have that $S_{A}(G)=\frac{t}{k}$ since $G=A^k(\open{B}[0,1])+z$ for some $z$.
Moreover, we can observe that on the $i^{\text{th}}$ axis, $G$ is at least $\frac{1}{2}(r_i-1)$ boxes of the form $A^k(B[0,1])$ from the closest gap.
Therefore, for each $i\in\{1,\dots,n\}$, we have $q=GD_A(j,C)$ satisfies $$\frac{r_i-1}{r_i^k}\geq \frac{2}{r_i^{t/q}}$$
moreover, for some $i\in\{1,\dots,n\}$ this holds with equality and hence 
$$GD_A(j,C)^{-1}=\frac{k}{t}-\frac{1}{t}\min\left\{\log_{r_i}\left(\frac{r_i-1}{2}\right)\right\}> 0.$$
Therefore, $S_A(G_j)^{-1}-GD_A(j,C_{\underline{r}})^{-1}=t^{-1}\min_{i\in\{1,\dots,n\}}\{\log_{r_i}((r_i-1)/2)\}$ and hence $$\tau_A(C_{\underline{r}})=t^{-1}\min_{i\in\{1,\dots,n\}}\left\{\log_{r_i}\left(\frac{r_i-1}{2}\right)\right\}.$$
From this and \Cref{prop: Winning conditions}, noting that the unbounded connected component of a Sierpinski carpet is $\R^n\setminus B[0,1]$, we have that $C_{\underline{r}}\cup (\R^n\setminus B[0,1])$ is $(\alpha(\underline{r},t), A_{\underline{r}}^t,0,1,1)-$winning, where $$\alpha(\underline{r},t)=\prod_{i=1}^n r_j^{t(1-\tau_{A_{\underline{r}}^t}(C_{\underline{r}}))}=\prod_{i=1}^n r_j^{t-\min_{i\in\{1,\dots,n\}}\left\{\log_{r_i}\left(\frac{r_i-1}{2}\right)\right\}}.$$
We can now apply \Cref{thm:patterns} and to determine if $C_{\underline{r}} \cup (\R^2 \setminus B[0,1])\cap B[0,1]=C_{\underline{r}}$ contains patterns for certain $\underline{r}$ values; it suffices to show that there exists a $\delta \in (0,1)$ such that the inequalities in \eqref{pattern_inequal_1} hold, with $\alpha = \alpha (\underline{r},t)$.
Using numerical methods we obtain the following.
\begin{itemize}
    \item With $\underline{r}=(2^{22},2^{23})$ we find that $C_{\underline{r}}$ contains a homothetic copy of every set with at most $37$ elements.
    \item With $\underline{r}=(2^{20},2^{21}+2^{20})$ we find that $C_{\underline{r}}$ contains a homothetic copy of every set with at most $3$ elements.
    \item With $\underline{r}=(2^{30},2^{20})$ we find that $C_{\underline{r}}$ contains a homothetic copy of every set with at most $463$ elements.
    \item With $\underline{r}=(2^{30},2^{28})$ we find that $C_{\underline{r}}$ contains a homothetic copy of every set with at most $223241$ elements.
\end{itemize}
\section*{Acknowledgements}
The author thanks Kenneth Falconer and Alexia Yavicoli for helpful discussions in writing this paper.
He also thanks Andrew Mitchell and Tony Samuel for their input and the Engineering and Physical Sciences Research Council (EPSRC) Doctoral Training Partnership and the University of Birmingham for financial support.

\printbibliography
\end{document}